\newtheorem{theorem}{Theorem}[section]
\newtheorem{definition}[theorem]{Definition}
\newtheorem{example}[theorem]{Example}
\newtheorem{remark}[theorem]{Remark}
\newtheorem{lem}[theorem]{Lemma}
\newtheorem{corollary}[theorem]{Corollary}
\newtheorem{observation}[theorem]{Observation}
\newtheorem{construction}[theorem]{Construction}
\newtheorem{repl}[theorem]{Replacement}
\newtheorem{el}[theorem]{Elementary facts about 
homotopy Cartesian objects}
\begin{document}
\title[On calculus of functors in model categories]
{On calculus of functors in model categories}
\author[{A. E.} {Stanculescu}]{{Alexandru E.} {Stanculescu}}
\address{\newline Department of Mathematics and Statistics,
\newline Masarykova Univerzita, Kotl\'{a}{\v{r}}sk{\'{a}} 2,\newline
611 37 Brno, Czech Republic}
\email{stanculescu@math.muni.cz}

\begin{abstract}
We present an analysis of some constructions and 
arguments from the universe of T. G. Goodwillie's 
Calculus, in a general model theoretic setting.
\end{abstract}

\maketitle

In this paper we undertake a small analysis of T. G. 
Goodwillie's Calculus \cite{Go2},\cite{Go3} in model categories.
To the best of our knowledge, it was N. Kuhn \cite{Ku} who first
described a general model theoretic setting for the development 
of Calculus and outlined Goodwillie's main results from \cite{Go3} 
in this setting. Our work is influenced by his, but it has a somewhat
different purpose. Our analysis is mainly oriented towards making 
more conceptual some of the constructions and arguments
from the universe of Calculus, while retaining a general model 
theoretic setting. This setting differs slightly from Kuhn's. We 
hope that the analysis also reveals some of the complications that 
might arise when one wants to work in such an environment.

The first topic we address in our analysis is the construction 
of the Taylor tower. Let {\bf Cat} be the category of all small 
categories. Starting with a small subcategory {\bf J} of {\bf Cat} 
which does not contain the empty category we construct a 
sequence $\{{\bf J}(n+1)\}_{n\geq 0}$ of small subcategories of 
{\bf Cat}. ${\bf J}(n+1)$ is obtained from ${\bf J}(n)$ using a 
Grothendieck construction. When {\bf J} consists of the terminal 
object of {\bf Cat} only, ${\bf J}(n+1)$ is the set of subsets of 
the set $\{1,2,...,n+1\}$ (viewed as a preorder with inclusions 
as arrows) with the empty set removed. To each functor 
$F:\mathcal{C}\rightarrow \mathcal{D}$
between simplicial model categories and each small 
subcategory {\bf J} of {\bf Cat} which does not contain 
the empty category, we associate 
a tower of functors $\{P_{n,{\bf J}}F:\mathcal{C}
\rightarrow \mathcal{D}\}_{n\geq 0}$ which coincides 
with Goodwillie's Taylor tower $\{P_{n}F\}_{n\geq 0}$ of 
$F$ \cite{Go3} when {\bf J} consists
of the terminal object of {\bf Cat} only.

The second topic is the proof of Goodwillie's $n$-excisive
approximation theorem \cite[Theorem 1.8]{Go3}.
To each functor $F:\mathcal{C}\rightarrow \mathcal{D}$
between simplicial model categories we associate 
a tower of functors $\{\tilde{P}_{n}F:\mathcal{C}\rightarrow 
\mathcal{D}\}_{n\geq 0}$. $\tilde{P}_{n}F$ is a minor variation 
of $P_{n}F$. There is a natural map
of towers $\{\tilde{P}_{n}F\}\rightarrow \{P_{n}F\}$, and
for suitable $\mathcal{C}$ and $F$ this map is a weak 
equivalence when evaluated at cofibrant objects.
We believe that the use of the tower
$\{\tilde{P}_{n}F\}$ sheds some light into
Rezk's proof \cite {Re} of Goodwillie's theorem.

The third topic is concerned with the notions of
(strongly) homotopy (co-)Cartesian cubes. In the first 
place, there is a body of technical results, due to Goodwillie 
and others, related to homotopy Cartesian cubes. 
We introduce generalized homotopy Cartesian cubes, 
and we prove the analogue, in our context, of some 
of these results. In the second place, we give a model 
theoretic interpretation of the notions of homotopy 
co-Cartesian and strongly homotopy co-Cartesian cube.
\\

{\bf Notation.}
We denote by {\bf Cat} the category of all small 
categories and by $CAT$ that of all large categories.
For $\mathcal{C},\mathcal{D}\in CAT$,
we denote by $Fun(\mathcal{C},\mathcal{D})$
the category of functors $\mathcal{C}\rightarrow 
\mathcal{D}$ and natural transformations between them.
When $\mathcal{C}$ is small, we shall use the 
notation $\mathcal{D}^{\mathcal{C}}$ instead.
The terminal (initial, zero, respectively) object 
of a category, when it exists, is denoted by 
$\ast$ ($\emptyset$, $0$, respectively). We 
let $[1]$ be the category freely 
generated by the graph $\{0\rightarrow 1\}$, 
$pb$ the category freely generated by the graph 
$0\rightarrow 1\leftarrow 2$, $po$ the category 
freely generated by the graph $0\leftarrow 1\rightarrow 2$ 
and $\omega$ the free category generated by the graph
$\{0\rightarrow 1\rightarrow 2\rightarrow 3\rightarrow ...\}$.
If $S$ is a set, $|S|$ is its cardinal. We denote by 
{\bf S} the category of simplicial sets and by $N$ 
the nerve functor from {\bf Cat} to {\bf S}.

\section{Some Grothendieck constructions}

\subsection{Contravariant Grothendieck constructions}
Let $\mathbb{B}$ be a small category and 
$\Psi:\mathbb{B}^{op}\rightarrow {\bf Cat}$ a functor.
Recall that the (contravariant) {\bf Grothendieck 
construction} $\underset{\mathbb{B}}\int (\Psi)$ (or 
simply $\underset{\mathbb{B}}\int \Psi$)
of $\Psi$ is the category with objects $(b,x)$ where 
$b\in \mathbb{B}$ and $x\in \Psi(b)$, and arrows  
$(b,x)\rightarrow (b',y)$ are pairs
$(u,f)$ with $u:b\rightarrow b'$ in $\mathbb{B}$ 
and $f:x\rightarrow \Psi(u)(y)$ in $\Psi(b)$. 

The projection $p:\underset{\mathbb{B}}\int 
(\Psi)\rightarrow \mathbb{B}$ is a split fibration. The 
fiber category of $p$ over $b\in \mathbb{B}$ is isomorphic
to $\Psi(b)$. For each $b\in \mathbb{B}$, we denote 
by $\tau_{b}$ the natural functor from the fiber 
category of $p$ over $b$ to $\underset{\mathbb{B}}\int \Psi$.

One has the formula $$\underset{\mathbb{B}}
\int \Psi\cong \overset{b}\int \Psi(b)\times 
(\mathbb{B}\downarrow b)  \qquad (1)$$
\begin{example}
Let $J$ be a small category. We denote by 
$J_{+}$ the category obtained by adding an 
initial object $\emptyset$ to $J$. One has 
$J_{+}=\underset{[1]}\int \Psi$, where $\Psi(1)=J$ 
and $\Psi(0)=\ast$. The natural functor 
$\tau_{1}:J\rightarrow J_{+}$ is an inclusion.
Writing the coend in formula (1) as a colimit we 
have a pushout diagram
\[
   \xymatrix{
J \ar[d] \ar[r] & \ast \ar[d]\\
J\times [1] \ar[r] & \underset{[1]}\int \Psi\\
}
  \]
where the right vertical arrow sends $j\in J$ to $(j,0)$.
\end{example}
\begin{example}
Let $F:I\rightarrow J$ be a functor between small 
categories. Let $\Psi(F):pb^{op}\rightarrow {\bf Cat}$ 
be the functor 
\[
   \xymatrix{
& I=\Psi(1) \ar[dr] \ar[dl]_{F}\\
J=\Psi(0) & & \ast=\Psi(2)\\
}
  \]
One may then form $\underset{pb}\int \Psi(F)$.
We shall use this construction when $F$ is the identity
functor of a small category $J$. In this case we denote
$\underset{pb}\int \Psi(Id_{J})$ by 
$\underset{pb}\int J$. We have natural functors 
\[
   \xymatrix{
J \ar@{->} @<3pt> [r]^{\tau_{0}} 
\ar@{->} @<-3pt> [r]_{\tau_{1}}
& \underset{pb}\int J\\
  }
  \]
Writing the coend in formula (1) as a colimit 
we have a pushout diagram
\[
   \xymatrix{
J \ar[d] \ar[r] & \ast \ar[d]\\
J\times pb \ar[r] & \underset{pb}\int J\\
}
  \]
where the left vertical arrow sends $j\in J$ to $(j,2)$.
This pushout can be calculated as the pushout
\[
   \xymatrix{
J \ar[d] \ar[r] & J_{+} \ar[d] & & &  (2)\\
J\times [1] \ar[r] & \underset{pb}\int J\\
}
  \]
where the left vertical arrow sends $j\in J$ to $(j,1)$. 
One has $$(\underset{pb}\int J)_{+}\cong  J_{+}\times 
[1] \qquad (3)$$ 
\end{example}
\begin{construction}
Given a small category $J$, we construct by 
induction a sequence $\{J(n+1)\}_{n\geq 0}$ 
of small categories, as follows. $J(1)=J$ and 
$$J(n+1)=\underset{pb}\int J(n)$$
\end{construction}
The natural functors (Example 1.2) $\tau_{0},
\tau_{1}:J(n)\rightarrow \underset{pb}\int J(n)$ 
shall be denoted by $\tau_{0}^{n}$ and 
$\tau_{1}^{n}$, so that we have a sequence 
\[
   \xymatrix{
J=J(1) \ar@{->} @<3pt> [r]^{\tau_{0}^{1}} 
\ar@{->} @<-3pt> [r]_{\tau_{1}^{1}}
& J(2) \ar@{->} @<3pt> [r]^{\tau_{0}^{2}} 
\ar@{->} @<-3pt> [r]_{\tau_{1}^{2}}
& J(3) \ar@{->} @<3pt> [r]^{\tau_{0}^{3}} 
\ar@{->} @<-3pt> [r]_{\tau_{1}^{3}}
& ...\\
  }
  \]
From Example 1.2 we have 
$$J(n+1)_{+}\cong J(n)_{+}\times [1]$$
\begin{example}
$\ast(3)$ is
\[
\xymatrix{
& & (2,\ast) \ar[dr] \ar[dd]\\
& (0,2) \ar[rr] \ar[dd] & & (1,2) \ar[dd]\\
(0,0) \ar[rr] \ar[dr] &  & (1,0) \ar[dr]\\
&  (0,1) \ar[rr] & & (1,1)\\
 }
   \]
and this has already been observed in \cite[Example 38.2]{CS}
(and perhaps other places). In general, let $S$ be a finite set. 
We denote by $\mathcal{P}(S)$ the set of all subsets of 
$S$. $\mathcal{P}(S)$ is a preorder under inclusion. We 
let $\mathcal{P}_{0}(S)=\mathcal{P}(S)-\{\emptyset\}$ 
and $\mathcal{P}_{1}(S)=\mathcal{P}(S)-\{S\}$. For $n\geq 0$ 
we let $\underline{n}=\{1,2,...,n\}$ and $\underline{0}=\emptyset$.
One has $$\ast(n+1)\cong 
\mathcal{P}_{0}(\underline{n+1}) \quad (n\geq 0)$$
The maps $\tau_{0}^{n},\tau_{1}^{n}:
\mathcal{P}_{0}(\underline{n})\rightarrow 
\mathcal{P}_{0}(\underline{n+1})$ are 
$\tau_{0}^{n}(S)=S$ and $\tau_{1}^{n}(S)=
S\cup \{n+1\}$. 
\end{example}

\subsection{Covariant Grothendieck constructions}
Let $\mathbb{B}$ be a small category and 
$\Psi:\mathbb{B}\rightarrow {\bf Cat}$ a functor.
Recall that the covariant {\bf Grothendieck construction} 
$\underset{\mathbb{B}}\int \Psi$ of $\Psi$ is the category 
with objects $(b,x)$ where $b\in \mathbb{B}$
and $x\in \Psi(b)$, and arrows  $(b,x)\rightarrow (b',y)$ are 
pairs $(u,f)$ with $u:b\rightarrow b'$ in $\mathbb{B}$ and 
$f:\Psi(u)(x)\rightarrow y$ in $\Psi(b')$. The projection 
$p:\underset{\mathbb{B}}\int \Psi\rightarrow \mathbb{B}$ 
is a split opfibration. The fiber category of $p$ over 
$b\in \mathbb{B}$ is isomorphic to $\Psi(b)$.
\\

As a rather trivial example, let $S$ be a nonempty set, 
viewed as a discrete category. The category $S_{+}$ (Example 1.1) 
is (also) a covariant Grothendieck construction. Indeed, let 
$2_{|S|}$ be the category with two objects $0$
and $1$ and $|S|$ arrows from $0$ to $1$. Let 
$\Psi:2_{|S|}\rightarrow {\bf Cat}$
be defined as $\Psi(0)=\ast$ and $\Psi(1)=S$. Then $S_{+}=
\underset{2_{|S|}}\int \Psi$. 

More interesting is the dual of Example 1.2.
\begin{example}
Let $F:I\rightarrow J$ be a functor between small 
categories. Let $\Psi(F):po\rightarrow {\bf Cat}$ be 
the functor 
\[
   \xymatrix{
& I=\Psi(1) \ar[dr] \ar[dl]_{F}\\
J=\Psi(0) & & \ast=\Psi(2)\\
}
  \]
One may then form the covariant Grothendieck 
construction $\underset{po}\int \Psi(F)$. We shall 
use this construction when $F$ is the identity functor 
of a small category $J$. In this case we denote 
$\underset{po}\int \Psi(Id_{J})$ by $\underset{po}\int J$. 
When $J$ is the discrete category on a set $S$, 
$\underset{po}\int S$ maybe depicted as a spider 
with $|S|$ legs. We illustrate the case 
$S=\underline{4}$ (Example 1.4) 
\[
   \xymatrix{
& 1 \ar[dl] \ar[drr] & 2 \ar[dr] \ar[dl] & & 3 \ar[dl] \ar[dr] & 
4 \ar[dr] \ar[dll]\\
a & b & & 0 & & c & d\\
}
  \]
We call the object $(2,\ast)$ of 
$\underset{po}\int S$ the body of the spider
($0$ in the above picture).
\end{example}
Let $J$ be a small category, $\mathcal{C}$ a cocomplete 
category and $\mathcal{Z}:(J_{+})^{op}\times J_{+}
\rightarrow \mathcal{C}$ a functor. For later purposes we 
need to understand $\overset{j\in J_{+}}\int \mathcal{Z}(j,j)$ 
better, especially when $J$ is a discrete category.
To begin with, we recall the calculation 
of $\overset{j\in J_{+}}\int \mathcal{Z}(j,j)$ as a colimit.
The (lower) {\bf twisted arrow category} of $J$, which we 
denote by $J_{\tau}$, has arrows $f:j\rightarrow k$ of $J$ as 
objects and a map $f\rightarrow f'$ is a commutative diagram 
\[
   \xymatrix{
j \ar[r] \ar[d]_{f} & j' \ar[d]^{f'}\\
k & k' \ar[l]\\
}
  \]
in $J$. There is a functor $K:(J_{+})_{\tau}\rightarrow 
(J_{+})^{op}\times J_{+}$, $K((j\rightarrow k))=(k,j)$, 
and $$\overset{j\in J_{+}}\int \mathcal{Z}(j,j)
\cong colim_{(J_{+})_{\tau}}\mathcal{Z}K$$
Now, if $J$ is a discrete category, there is a natural 
isomorphism $$\xi_{J}:\underset{po}\int J
\overset{\cong}\longrightarrow (J_{+})_{\tau}$$
given by $(0,j)\mapsto Id_{j}$, $(1,j)\mapsto 
(\emptyset \rightarrow j)$ and $(2,\ast)\mapsto 
Id_{\emptyset}$. Therefore we have a natural 
isomorphism $$colim_{\underset{po}\int J}\mathcal{Z}K\xi_{J}
\overset{\cong}\longrightarrow 
colim_{(J_{+})_{\tau}}\mathcal{Z}K$$

\begin{lem} {\rm (Inheritance results)}

$(a)$ If $J$ is a small Reedy category with cofibrant 
constants then so are $J_{+}$ and $\underset{pb}\int J$. 

$(b)$ If {\bf J} is a small subcategory of {\bf Cat} (which does 
not contain the empty category), then $\underset{pb}\int {\bf J}$ 
is a small subcategory of {\bf Cat} (which does not contain the
empty category).
\end{lem}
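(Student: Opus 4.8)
The plan is as follows. Both parts reduce (modulo trivial smallness assertions) to producing Reedy structures and checking cofibrant constants, and I will use throughout the standard criterion that a small Reedy category $\mathcal{R}$ has cofibrant constants exactly when, for every object $r$, its latching category at $r$ — the category of non-identity direct-part morphisms of $\mathcal{R}$ with target $r$ — is empty or connected. I dispose of $(b)$ first, as it is soft. The construction $J\mapsto\underset{pb}\int J$ of Example 1.2 is functorial in $J$: a functor $u\colon J\to J'$ induces a natural transformation $\Psi(Id_J)\Rightarrow\Psi(Id_{J'})$ (namely $u$ on the objects $0,1$ of $pb^{op}$ and the identity on $2$), and any natural transformation of functors $pb^{op}\to{\bf Cat}$ induces, functorially, a functor between the associated Grothendieck constructions. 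I then define $\underset{pb}\int{\bf J}$ to be the subcategory of ${\bf Cat}$ generated by the categories $\underset{pb}\int j$ ($j$ an object of ${\bf J}$) together with the functors $\underset{pb}\int u$ ($u$ a morphism of ${\bf J}$); it is small because ${\bf J}$ is, and none of its objects is the empty category, since every $\underset{pb}\int j$ contains the object $(2,\ast)$ from its definition.

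For $(a)$ I build the Reedy structures by hand. On $J_+$: as a plain category this is $J$ with one new initial object $\emptyset$, the new morphisms being exactly the canonical $\emptyset\to j$. Give $\emptyset$ a new least degree, keep on the subcategory $J$ the degrees and the direct and inverse morphisms of $J$, and put every $\emptyset\to j$ into the direct part. The Reedy axioms are transparent: the only non-identity morphisms not already in $J$ are the $\emptyset\to j$, and each of these factors as $Id_{\emptyset}$ followed by itself, $\emptyset$ receiving no non-identity morphism and emitting no non-identity inverse morphism. For cofibrant constants, the latching category at $\emptyset$ is empty, and the latching category at $j\in J$ is that of $J$ at $j$ together with the extra object $\emptyset\to j$; this extra object maps (uniquely) to every other object of the latching category because $\emptyset$ is initial, so the latching category is nonempty and connected. (This uses only that $J$ is a Reedy category, not that it has cofibrant constants.)

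On $\underset{pb}\int J$ I use the description of Example 1.2: the objects are $(0,j)$ and $(1,j)$ for $j\in J$, and $e=(2,\ast)$; a morphism $(0,j)\to(1,j')$ is a pair $(a,\phi)$ with $\phi\colon j\to j'$ in $J$; there is a unique body morphism $(b,\ast)\colon e\to(1,j')$ for each $j'$; inside each of the two copies of $J$ the morphisms are those of $J$; and there are no others. I stratify by degree so that the copy $\{(0,j)\}$ lies entirely above the copy $\{(1,j)\}$, which lies entirely above $e$ (for instance $\deg e=0$, $\deg(1,j)=1+\deg_J(j)$ and $\deg(0,j)=(1+\lambda)+\deg_J(j)$, with $\deg_J$ valued in the ordinal $\lambda$). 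For the direct part I take the direct morphisms of $J$ inside the two copies of $J$, together with all body morphisms; for the inverse part, the inverse morphisms of $J$ inside the two copies, together with all $(a,\phi)$ with $\phi$ inverse in $J$. That these are subcategories meeting only in identities, and that direct (resp.\ inverse) morphisms strictly raise (resp.\ lower) degree, is routine. The one point needing care is unique factorization: a morphism inside a copy of $J$ factors as in $J$; a body morphism is already in the direct part and its only factorization is $Id_{e}$ followed by itself, because $e$ is of least degree and emits no non-identity inverse morphism; and a cross morphism $(a,\phi)$ factors as $(a,\phi^-)$ followed by $(1,\phi^+)$, where $\phi=\phi^+\phi^-$ is the factorization in $J$, uniqueness coming from uniqueness in $J$ together with the observation that a direct-part morphism with target $(1,j')$ is either of the form $(1,\chi)$ or a body morphism, the latter being impossible since its source $e$ receives no cross morphism. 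Finally, for cofibrant constants: the latching category at $e$ is empty; that at $(0,j)$ is the latching category of $J$ at $j$, hence empty or connected by hypothesis; and that at $(1,j)$ is the latching category of $J$ at $j$ together with the body morphism $e\to(1,j)$, which maps to every other object of it (compose with the body morphism into the relevant source), so it is nonempty and connected.

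The crux is finding the right Reedy structure on $\underset{pb}\int J$. The two candidates that first suggest themselves both fail: the product structure transported along the isomorphism $(\underset{pb}\int J)_+\cong J_+\times[1]$ of formula (3) violates the connectivity condition at the objects $(1,j)$ — the ``diagonal'' cross morphism $(0,j)\to(1,j)$ becomes an isolated object of the latching category as soon as $j$ is minimal for the direct part of $J$ — while the structure inherited from the split fibration $\underset{pb}\int J\to pb$ violates unique factorization, a body morphism $e\to(1,j')$ being cuttable through every direct morphism $(1,\chi)\colon(1,k)\to(1,j')$. The structure above repairs both at once: putting $e$ at the bottom and the body morphisms into the direct part makes the body morphisms factor trivially, and at the same time turns the body morphism into $(1,j)$ into a weakly initial object of the latching category there — which is exactly what is needed to keep that latching category connected.
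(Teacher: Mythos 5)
The paper states Lemma 1.6 without proof, so there is no argument of the author's to compare yours against; judged on its own, your proof is correct and complete. The Reedy structure you put on $\underset{pb}\int J$ (the body object $(2,\ast)$ at the bottom, body morphisms in the direct part, a cross morphism $(a,\phi)$ in the inverse part exactly when $\phi$ is inverse in $J$, and the two copies of $J$ stacked by degree) does verify unique factorization and makes every latching category empty or connected, and your diagnosis of why the two more obvious candidates fail is accurate. The only related material in the paper is Example 1.7, which in the iterated special case $J=\ast$ gives $\mathcal{P}_{0}(\underline{n+1})$ the all-inverse Reedy structure; that simpler choice is available there only because the fibers are discrete and cannot work for a general $J$ carrying its own direct morphisms, so a mixed structure like yours is genuinely needed for the statement as written.
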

\begin{example}
Continuing Example 1.4, $\mathcal{P}_{0}(\underline{n+1})$ is a 
Reedy category with cofibrant constants if we let the inverse 
subcategory $\overleftarrow{\mathcal{P}_{0}(\underline{n+1})}$
be $\mathcal{P}_{0}(\underline{n+1})$, the direct subcategory 
$\overrightarrow{\mathcal{P}_{0}(\underline{n+1})}$
be the discrete category on the set of object of 
$\mathcal{P}_{0}(\underline{n+1})$ and the degree of 
$S\in \mathcal{P}_{0}(\underline{n+1})$ be $n+1-|S|$.
\end{example}
Lemma 1.6 has a dual formulation. In particular,
for every finite set $S$, $\underset{po}\int S$ is a 
Reedy category with fibrant constants.

\section{Homotopy limits and colimits I}
 
In order to construct the Goodwillie tower of
a functor between simplicial model categories, we 
recall in this section the minimum necessary 
from the theory of homotopy limits and colimits
in simplicial model categories
\subsection{Homotopy limits}
Let $\mathcal{C}$ be a category. We denote 
by $({\bf Cat}// \mathcal{C})$ the category 
with objects pairs $(I,\mathcal{X}:I\rightarrow \mathcal{C})$, 
where $I\in {\bf Cat}$, and arrows 
$<F,\alpha>:(I,\mathcal{X}:I\rightarrow \mathcal{C})
\rightarrow (J,\mathcal{Y}:J\rightarrow \mathcal{C})$
those pairs consisting of a functor $F:I\rightarrow J$ 
and a natural transformation 
$\alpha:\mathcal{Y}F\Rightarrow \mathcal{X}$. 
If $\mathcal{D}$ is another category, a functor 
$f:\mathcal{C}\rightarrow \mathcal{D}$
induces a functor $({\bf Cat}// f):({\bf Cat}// 
\mathcal{C})\rightarrow ({\bf Cat}// \mathcal{D})$. 
\\

Let $\mathcal{C}$ be a simplicial model category, 
$J$ a small category and $\mathcal{X}:J\rightarrow 
\mathcal{C}$ a functor. $holim_{J}\mathcal{X}$ stands 
for the homotopy limit of $\mathcal{X}$, 
as defined in \cite[18.1.8]{Hi}. $holim$ is a functor 
$({\bf Cat}// \mathcal{C})^{op} \rightarrow \mathcal{C}$.
$holim_{J}:\mathcal{C}^{J}\rightarrow \mathcal{C}$ 
is a simplicial functor (for $holim_{J}$ to be simplicial 
one does not need $\mathcal{C}^{J}$ be a  model category). 
A simplicial functor $F:\mathcal{C}\rightarrow \mathcal{D}$ 
between simplicial model categories induces a natural map 
$$F(holim_{J}\mathcal{X})\rightarrow holim_{J}F\mathcal{X}$$

Let $\hat{F}:\mathcal{C}\rightarrow \mathcal{C}$ be a 
fibrant approximation on $\mathcal{C}$. $cholim_{J}$
stands for the composite $$\mathcal{C}^{J}\overset{\hat{F}}
\rightarrow\mathcal{C}^{J}\overset{holim_{J}}\longrightarrow 
\mathcal{C}$$ and is referred to as the corrected homotopy limit 
of $J$-diagrams in $\mathcal{C}$. $cholim$ is a functor 
$({\bf Cat}// \mathcal{C})^{op}\rightarrow \mathcal{C}$. 
We shall use both $holim$ and $cholim$. If $\mathcal{C}$ is 
cofibrantly generated, then $\mathcal{C}$ has a simplicial 
fibrant approximation \cite[4.3.7]{Hi}, and, with this choice 
of fibrant approximation, $cholim_{J}$ is a simplicial functor.

\subsection{Homotopy colimits}
Let $\mathcal{C}$ be a category. We denote by
$({\bf Cat}\downarrow \mathcal{C})$ the category with 
objects pairs $(I,\mathcal{X}:I\rightarrow \mathcal{C})$, 
where $I\in {\bf Cat}$, and arrows 
$<F,\alpha>:(I,\mathcal{X}:I\rightarrow \mathcal{C})
\rightarrow (J,\mathcal{Y}:J\rightarrow \mathcal{C})$
those pairs consisting of a functor $F:I\rightarrow J$ 
and a natural transformation $\alpha:\mathcal{X}
\Rightarrow \mathcal{Y}F$. If $\mathcal{D}$ is another 
category, a functor $f:\mathcal{C}\rightarrow 
\mathcal{D}$ induces a functor 
$({\bf Cat}\downarrow f):({\bf Cat}\downarrow \mathcal{C})
\rightarrow ({\bf Cat}\downarrow \mathcal{D})$.
\\

Let $\mathcal{C}$ be a simplicial model category, 
$J$ a small category and $\mathcal{X}:J\rightarrow 
\mathcal{C}$ a functor. $hocolim_{J}\mathcal{X}$ stands 
for the homotopy colimit of $\mathcal{X}$, as defined in 
\cite[18.1.2]{Hi}. $hocolim$ is a functor 
$({\bf Cat}\downarrow \mathcal{C}) \rightarrow \mathcal{C}$.
A simplicial functor $F:\mathcal{C}\rightarrow \mathcal{D}$ 
induces a natural map $$hocolim_{J}F\mathcal{X}\rightarrow 
F(hocolim_{J}\mathcal{X})$$ Let $\tilde{C}:\mathcal{C}
\rightarrow \mathcal{C}$ be a cofibrant approximation 
on $\mathcal{C}$. $chocolim_{J}$ stands for the composite 
$$\mathcal{C}^{J}\overset{\tilde{C}}\rightarrow
\mathcal{C}^{J}\overset{hocolim_{J}}\longrightarrow 
\mathcal{C}$$ and is referred to as the corrected 
homotopy colimit of $J$-diagrams in $\mathcal{C}$.
$chocolim$ is a functor $({\bf Cat}\downarrow 
\mathcal{C}) \rightarrow \mathcal{C}$. 
We shall use both $hocolim$ and $chocolim$. 

\section{The construction of the Taylor tower}

In this section we examine the construction of the Taylor 
tower of a functor between simplicial model categories 
\cite{Go3},\cite{Ku}. After that we study the 
ingredients which appear in the construction.

\subsection{The $\star$ operation}
We begin with some general considerations. 
\\

(1) Let $\mathcal{V}$ be a monoidal category and $\mathcal{C}$
a category equipped with an action $\circledast:\mathcal{V}\times 
\mathcal{C} \rightarrow \mathcal{C}$ of $\mathcal{V}$. Let $A$ 
be a monoid in $\mathcal{V}$ and $Y$ a left $A$-module in $\mathcal{C}$.
The category $(\mathcal{V}\downarrow A)$ becomes a monoidal
category which acts on $(\mathcal{C}\downarrow Y)$. Let us denote 
this action by $\circledast '$. Suppose that $\mathcal{C}$ is cocomplete.
For each small category $J$ we have an induced functor
$$\circledast'_{J}:(\mathcal{V}\downarrow A)^{J^{op}}\times 
(\mathcal{C}\downarrow Y)^{J}\longrightarrow 
(\mathcal{C}\downarrow Y)$$

(2) Let $\mathcal{C}$ be a category with terminal object and 
$J$ a small category. We define a functor
$R_{J}:\mathcal{C}\rightarrow \mathcal{C}^{J_{+}}$ as
$$R_{J}X(j)=
\begin{cases}
X, & \text{if }  j=\emptyset\\
\ast, & \text{otherwise}
\end{cases} $$
In particular, for any category $\mathcal{C}$ and any $Y\in 
\mathcal{C}$ we have the functor $R_{J}:(\mathcal{C}\downarrow Y)
\rightarrow (\mathcal{C}\downarrow Y)^{J_{+}}$.
\\

We shall apply the previous considerations in the following 
situation. $\mathcal{C}$ is a simplicial model category, $A$
is the terminal simplicial monoid and $Y\in \mathcal{C}$.
For each small category $J$ we have then a composite functor
$${\bf S}^{J_{+}^{op}}\times (\mathcal{C}\downarrow Y)\overset{Id\times R_{J}}
\longrightarrow {\bf S}^{J_{+}^{op}}\times (\mathcal{C}\downarrow Y)^{J_{+}}
\overset{\circledast '_{J_{+}}}\longrightarrow (\mathcal{C}\downarrow Y)$$
Let $(X,X\rightarrow Y)\in (\mathcal{C}\downarrow Y)$. We define 
$$J\star_{Y} X=hocolim_{J_{+}} R_{J}(X,X\rightarrow Y)$$
We obtain a functor $$-\star_{Y}-:{\bf Cat}\times 
(\mathcal{C}\downarrow Y)\rightarrow (\mathcal{C}\downarrow Y)$$
When $Y=\ast$ we denote $J\star_{\ast} X$ by $J\star X$, so that
$$J\star X=hocolim_{J_{+}}R_{J}X$$
One has $\emptyset \star X=X$ and $\ast \star X=CX$, the cone on $X$.
The $\star$ operation can be restricted to any subcategory 
of {\bf Cat} which does not contain the empty category. In
particular, for each $n\geq 0$ we have a functor
$-\star-:\mathcal{P}_{0}(\underline{n+1})\times \mathcal{C}
\rightarrow \mathcal{C}$.

\subsection{The Taylor tower}
Let {\bf J} be a small subcategory of {\bf Cat} which does 
not contain the empty category.
Let $\mathcal{C}$ and $\mathcal{D}$ be two simplicial
model categories. For each $n\geq 0$ we define the 
endofunctor $$T_{n,{\bf J}}:Fun(\mathcal{C},\mathcal{D})
\rightarrow Fun(\mathcal{C},\mathcal{D})$$
as $$(T_{n,{\bf J}}F)(X)=cholim_{{\bf J}(n+1)}F((-)\star X)$$
We obtain a natural
transformation $t_{n}:Id\Rightarrow T_{n,{\bf J}}$. 
Using the map $\tau_{0}^{n+1}:{\bf J}(n+1)\rightarrow 
{\bf J}(n+2)$ (see below Construction 1.3) we obtain a natural 
transformation $q_{n+1,1}:T_{n+1,{\bf J}}\Rightarrow T_{n,{\bf J}}$ 
such that $q_{n+1,1}t_{n+1,{\bf J}}=t_{n,{\bf J}}$ for each $n\geq 0$.

\begin{observation}
Let $(\mathcal{V},\otimes ,e)$ be a monoidal category. Let $T$
be an object of $\mathcal{V}$ and $t:e\rightarrow T$ a map. 
One can construct the functor $\omega \rightarrow \mathcal{V}$
$$e\rightarrow T\cong e\otimes T\overset{t\otimes T}
\longrightarrow T\otimes T\cong e\otimes T\otimes T
\overset{t\otimes T^{\otimes 2}}\longrightarrow
T^{\otimes 3}\cong e\otimes T^{\otimes 3}\rightarrow ...$$
In particular, let $cst(e)$ denote the unit object of 
$\mathcal{V}^{\omega^{op}}$ and let $T$ be an object of 
$\mathcal{V}^{\omega^{op}}$, with structure maps 
$q_{n+1,1}:T_{n+1}\rightarrow T_{n}$. 
The observation we want to make 
is that a map $t:cst(e)\rightarrow T$ gives rise to a functor 
$\omega \times \omega^{op}\rightarrow \mathcal{V}$,
$(i,n)\mapsto T_{n}^{\otimes i}$, with the convention 
that for every $X\in \mathcal{V}$, $X^{\otimes 0}=e$.
\end{observation}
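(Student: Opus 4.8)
The plan is to deduce the statement from the elementary construction in the first paragraph of the observation, applied inside a functor category. First I would recall that for any small category $\mathcal{A}$ the functor category $\mathcal{V}^{\mathcal{A}}$ is again monoidal, with tensor product and unit formed objectwise; in particular $cst(e)$ is the unit of $\mathcal{V}^{\mathcal{A}}$. Taking $\mathcal{A}=\omega^{op}$, the triple $(\mathcal{V}^{\omega^{op}},\otimes ,cst(e))$ is a monoidal category, $T$ is one of its objects, and $t:cst(e)\rightarrow T$ is a morphism in it.

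Next I would feed this data into the construction of the first paragraph. It produces a functor $\omega\rightarrow \mathcal{V}^{\omega^{op}}$ sending $i$ to the $i$-fold objectwise tensor power $T^{\otimes i}$, with $T^{\otimes 0}=cst(e)$, and sending the generating edge $i\rightarrow i+1$ of $\omega$ to the map $T^{\otimes i}\cong cst(e)\otimes T^{\otimes i}\overset{t\otimes T^{\otimes i}}{\longrightarrow}T^{\otimes (i+1)}$. Because the tensor product and the unit of $\mathcal{V}^{\omega^{op}}$ are computed objectwise, the value of this functor at $i$, evaluated at $n\in \omega^{op}$, is $T_{n}^{\otimes i}$ (with $T_{n}^{\otimes 0}=e$), and its structure maps in the $\omega^{op}$-direction are the iterated tensor powers $q_{n+1,1}^{\otimes i}$ of the maps $q_{n+1,1}$. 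Finally I would transpose along the canonical isomorphism $(\mathcal{V}^{\omega^{op}})^{\omega}\cong \mathcal{V}^{\omega \times \omega^{op}}$ (cartesian closedness of $CAT$) to obtain the asserted functor $\omega \times \omega^{op}\rightarrow \mathcal{V}$, $(i,n)\mapsto T_{n}^{\otimes i}$.

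There is essentially nothing to prove beyond this transposition; the one point deserving a word is the coherence bookkeeping concealed in ``the $i$-fold tensor power'', namely that one fixes a bracketing of $T^{\otimes i}$ once and for all and invokes Mac Lane's coherence theorem so that the structure maps are independent of that choice up to canonical isomorphism. I expect this to be the only (mild) obstacle, and in the approach above it is already discharged inside the construction of the first paragraph, so no further work is needed. Should one want a self-contained argument instead, one would define directly the two families of maps $T_{n}^{\otimes i}\rightarrow T_{n}^{\otimes (i+1)}$ (as above) and $q_{n+1,1}^{\otimes i}:T_{n+1}^{\otimes i}\rightarrow T_{n}^{\otimes i}$, observe that the first extends to a functor on $\omega$ and the second to a functor on $\omega^{op}$ since both categories are free on their generating graphs, and then check that for each pair of generating edges the corresponding square in $\mathcal{V}$ commutes; this last verification reduces, using bifunctoriality of $\otimes$ and naturality of the unit isomorphism, to the identity $q_{n+1,1}\circ t_{n+1}=t_{n}$, which is precisely the statement that $t$ is a morphism of $\mathcal{V}^{\omega^{op}}$.
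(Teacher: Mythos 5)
Your argument is correct and is precisely the route the paper intends (the paper gives no separate proof, but its phrasing ``let $cst(e)$ denote the unit object of $\mathcal{V}^{\omega^{op}}$'' signals exactly your step of applying the first-paragraph construction to the objectwise monoidal structure on $\mathcal{V}^{\omega^{op}}$ and then transposing via $(\mathcal{V}^{\omega^{op}})^{\omega}\cong \mathcal{V}^{\omega\times\omega^{op}}$). Your closing remark correctly identifies the only substantive check, namely that the compatibility of the two directions reduces to $q_{n+1,1}\circ t_{n+1}=t_{n}$, i.e.\ to $t$ being a morphism of towers.
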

We shall apply the observation to the situation
$\mathcal{V}=END(Fun(\mathcal{C},\mathcal{D}))$, $T_{-,{\bf J}}$
defined above (an object of $\mathcal{V}^{\omega^{op}}$) and 
$t:cst(Id)\rightarrow T_{-,{\bf J}}$ defined above 
(a map of $\mathcal{V}^{\omega^{op}}$).
Then, setting $$P_{n,{\bf J}}=chocolim_{\omega}(Id\rightarrow T_{n,{\bf J}} 
\rightarrow T_{n,{\bf J}}^{2} \rightarrow  T_{n,{\bf J}}^{3}\rightarrow ...)$$
defines an object of $\mathcal{V}$ and there are natural maps
$q_{n+1}:P_{n+1,{\bf J}}\Rightarrow P_{n,{\bf J}}$, so that 
we obtain an object $P_{-,{\bf J}}$ of $\mathcal{V}^{\omega^{op}}$. 
If $p_{n}:Id\Rightarrow P_{n,{\bf J}}$
is the natural map, then clearly $q_{n+1}p_{n+1}=p_{n}$, 
that is, $p:cst(Id)\Rightarrow P_{-,{\bf J}}$.

\begin{definition}
Let $\mathcal{C}$ and $\mathcal{D}$ be two simplicial 
model categories and $F\in Fun(\mathcal{C},\mathcal{D})$.
Let {\bf J} be a small subcategory of {\bf Cat} which does 
not contain the empty category. $P_{n,{\bf J}}F$ is referred 
to as the $n$-th {\bf Taylor polynomial} of $F$ 
{\bf with respect to J}, and $$...\rightarrow 
P_{n,{\bf J}}F\overset{q_{n}F}\rightarrow
 P_{n-1,{\bf J}}F\overset{q_{n-1}F}
\rightarrow...\rightarrow  P_{1,{\bf J}}F
\overset{q_{1}F}\rightarrow P_{0,{\bf J}}F$$
as the {\bf Taylor tower} of $F$ {\bf with respect to J}. 
We write it as $\{P_{n,{\bf J}}F\}$. When {\bf J} consists 
of the terminal category only, $P_{n,\ast}F$ is referred 
to as the $n$-th {\bf Taylor polynomial} of $F$ and 
written $P_{n}F$. The Taylor tower of $F$ with respect 
to $\ast$ is written $\{P_{n}F\}$.
\end{definition}
\begin{remark}
To construct the Taylor tower of a functor 
$F:\mathcal{C}\rightarrow \mathcal{D}$ one does 
not need the full strength of the fact that $\mathcal{C}$
is a simplicial model category. One only needs a category 
$\mathcal{C}$ with terminal object such that for every 
finite set $S$ and every $X\in \mathcal{C}$, $S
\star X \in \mathcal{C}$. In light of the 
considerations from 1.2, some full subcategories 
of a locally presentable pointed simplicial model 
category consisting of cofibrant objects
are natural candidates for such a $\mathcal{C}$.
For example, let $\mathcal{C}$ be a locally 
$\lambda$-presentable pointed simplicial model 
category such that for every finite set $S$, tensoring
with $N( S_{+})^{op}$ preserves $\lambda$-presentable
objects. We denote by $\mathcal{C}_{\lambda, c}$ 
the full subcategory of $\mathcal{C}$ consisting of the
$\lambda$-presentable objects which are cofibrant.
Given a simplicial model category $\mathcal{D}$ and a functor
$F:\mathcal{C}_{\lambda, c}\rightarrow \mathcal{D}$,
one can construct $P_{n}F:\mathcal{C}_{\lambda, c}
\rightarrow \mathcal{D}$.
\end{remark}

\subsection{Properties of the $\star$ operation}

(1) For each $J\in {\bf Cat}$, $J\star -$ is a simplicial
functor. $J\star -$ preserves the simplicial action provided
that $\mathcal{C}$ is pointed.

(2) Let $I$ and $J$ be small categories and $\mathcal{X}:
I\rightarrow \mathcal{C}$ a functor. The natural map 
$$hocolim_{I}(J\star \mathcal{X}(-))\rightarrow 
J\star (hocolim_{I}\mathcal{X})$$
is an isomorphism provided that $\mathcal{C}$ is pointed.
Consequently, in this case one has $I\star (J\star X)\cong J\star (I\star X)$, 
so that $(T_{n}F)(J\star (-))\cong T_{n}(F(J\star (-)))$ and therefore 
$(P_{n}F)(J\star (-))\cong P_{n}(F(J\star (-)))$ for every 
$F\in Fun(\mathcal{C},\mathcal{D})$.

(3) Let $F:\mathcal{C}\rightarrow \mathcal{D}$ and 
$G:\mathcal{D}\rightarrow \mathcal{E}$ be functors between
simplicial model categories. Then
$$(J\star G(-))F= J\star GF(-):\mathcal{C}\rightarrow \mathcal{E}$$

(4) Let $F:\mathcal{C}\rightarrow \mathcal{D}$ be a functor
between simplicial model categories. We have a natural transformation 
$F^{J_{+}}R_{J}\Rightarrow R_{J}F$
which induces a natural transformation
$$hocolim_{J_{+}}F^{J_{+}}R_{J}(-) 
\Rightarrow J\star F(-):\mathcal{C}\rightarrow \mathcal{D}$$
which is a natural isomorphism if $F(\ast)\cong \ast$. If, moreover,
$F$ is a simplicial functor, then we have a natural transformation
$$hocolim_{J_{+}}F^{J_{+}}R_{J}(-) 
\Rightarrow F(J\star (-)):\mathcal{C}\rightarrow \mathcal{D}$$
Summing up, if $F$ is a simplicial functor and $F(\ast)\cong \ast$,
then we have a natural map
$$ J\star F(-)\Rightarrow  F(J\star (-))$$

(5) Let $F:\mathcal{C}\rightarrow \mathcal{D}$ and 
$G:\mathcal{D}\rightarrow \mathcal{E}$ be simplicial functors between
simplicial model categories. We have natural transformations
$(GF)^{J_{+}}R_{J}\Rightarrow G^{J_{+}}R_{J}F\Rightarrow R_{J}(GF)$
which, together with (4), induce a diagram of natural transformations
\[
   \xymatrix{
& hocolim_{J_{+}}(GF)^{J_{+}}R_{J}(-) \ar[ddl] \ar[dr]\\
& & G(hocolim_{J_{+}}F^{J_{+}}R_{J}(-)) \ar[dl] \ar[dr]\\
J\star GF(-) & G(J\star F(-)) & & GF(J\star (-))\\
}
  \]
If, moreover, $F(\ast)\cong \ast$ and $G(\ast)\cong \ast$, 
then by (4) we have natural maps $$J\star GF(-) \Rightarrow 
G(J\star F(-))\Rightarrow GF(J\star (-))$$

\subsection{Elementary properties of the $T_{n}$ and $P_{n}$ 
constructions} Let $F\in Fun(\mathcal{C},\mathcal{D})$. 

(1) Suppose that $\mathcal{D}$ is endowed with simplicial
fibrant and cofibrant approximation functors, and that we 
agree to construct the $cholim$ and $chocolim$ functors using 
these approximations. If $F$ is a simplicial functor, then, using 
the first part of 3.3(1), $T_{n}F$ and $P_{n}F$ are simplicial
functors as well.

(2) Suppose that the terminal object of $\mathcal{C}$ is cofibrant.
If $F$ preserves weak equivalences between cofibrant objects
then so do $T_{n}F$ and $P_{n}F$.

(3) Let $G\in Fun(\mathcal{C},\mathcal{D})$ and let 
$\alpha:F\Rightarrow G$ be a map which is objectwise a weak 
equivalence. Then the induced maps $T_{n}\alpha:T_{n}F
\Rightarrow T_{n}G$ and $P_{n}\alpha:P_{n}F\Rightarrow 
P_{n}G$ are objectwise a weak equivalence. Suppose that 
the terminal object of $\mathcal{C}$ is cofibrant. If 
$\alpha$ is objectwise a weak equivalence on cofibrant 
objects, then so are $T_{n}\alpha$ and $P_{n}\alpha$.

(4) Let use denote the tensor in $\mathcal{C}$ by $-\circledast -$ 
and the cotensor in $\mathcal{D}$ by $(-)^{(-)}$. One has 
$$T_{n}F(\ast)\cong cholim_{S\in \mathcal{P}_{0}(\underline{n+1})} 
F(N(S_{+}^{op})\circledast \ast)$$
If $\mathcal{C}$ is pointed then $T_{n}F(0)\cong 
(\hat{F}F0)^{N(\mathcal{P}_{0}(\underline{n+1}))}$.

(5) Suppose that $F$ preserves weak equivalences. 
Then for each $X\in \mathcal{C}$,
$P_{0}F(X)$ has the homotopy type of $F(\ast)$. Consequently,
the map $p_{0}F:F\Rightarrow P_{0}F$ is an objectwise 
weak equivalence if and only if $F(X)\rightarrow F(\ast)$ is a weak
equivalence for each $X\in \mathcal{C}$ (if and only if $F$ sends
every map in $\mathcal{C}$ to a weak equivalence in $\mathcal{D}$).

A less elementary property of the $T_{n}$ and $P_{n}$ 
constructions is given in Corollary 5.6.
\section{The auxiliary tower}
Let $\mathcal{C}$ and $\mathcal{D}$ be two simplicial 
model categories and $F\in Fun(\mathcal{C},\mathcal{D})$.
In this section we construct a tower $\{\tilde{P}_{n}F\}$ 
and a map of towers $\{\tilde{P}_{n}F\}\rightarrow \{P_{n}F\}$.
If the terminal object of $\mathcal{C}$ is cofibrant and 
$F$ preserves weak equivalences between cofibrant objects,
this map is shown to be a weak equivalence when 
evaluated at cofibrant objects.

\subsection{The $\star^{h}$ operation}
Let $\mathcal{C}$ be a simplicial model category
with simplicial action which we denote by $\circledast$.
Let $S$ be a set, viewed as a discrete category and $X\in \mathcal{C}$.
Recall from 3.1 the functor $R_{S}$. We define a functor 
$(S,X):(S_{+})^{op}\times S_{+}\rightarrow \mathcal{C}$ as 
$(S,X)(j,k)=N(j\downarrow S_{+})^{op}\circledast R_{S}X(k)$.
From 1.2 and 3.1 we have 
$$colim_{\underset{po}\int S}(S,X)K\xi_{S}\overset{\cong}
\longrightarrow \overset{j\in S_{+}}\int (S,X)\cong S\ast X$$
For simplicity, we define $\tilde{R}_{S}:\mathcal{C}
\rightarrow \mathcal{C}^{\underset{po}\int S}$ 
as $\tilde{R}_{S}X=(S,X)K\xi_{S}$. Precisely,
$\tilde{R}_{S}X(0,k)=N(k\downarrow S_{+})^{op}\circledast \ast=\ast$,
$\tilde{R}_{S}X(1,j)=N(j\downarrow S_{+})^{op}\circledast X=X$ 
and $\tilde{R}_{S}X(2,\ast)=N( S_{+})^{op}\circledast X$.
By definition
$$S\star^{h} X=hocolim_{\underset{po}\int S}\tilde{R}_{S}X$$

\subsection{The auxiliary tower}
Let $\mathcal{C}$ and $\mathcal{D}$ be two 
simplicial model categories. For each $n\geq 0$ 
we define the endofunctor 
$$\tilde{T}_{n}:Fun(\mathcal{C},\mathcal{D})
\rightarrow Fun(\mathcal{C},\mathcal{D})$$
as $$(\tilde{T}_{n}F)(X)=cholim_{\mathcal{P}_{0}
(\underline{n+1})}F((-)\star^{h} X)$$ We obtain a 
natural transformation $\tilde{t}_{n}:Id\Rightarrow \tilde{T}_{n}$. 
Using the map $\tau_{0}^{n+1}:\mathcal{P}_{0}(\underline{n+1})
\rightarrow \mathcal{P}_{0}(\underline{n+2})$
(Example 1.4) we obtain a natural transformation 
$\tilde{q}_{n+1,1}:\tilde{T}_{n+1}\Rightarrow 
\tilde{T}_{n}$ such that $\tilde{q}_{n+1,1}\tilde{t}_{n+1}=\tilde{t}_{n}$ 
for each $n\geq 0$. We apply Observation 3.1 to the situation
$\mathcal{V}=END(Fun(\mathcal{C},\mathcal{D}))$, $\tilde{T}$
defined above (an object of $\mathcal{V}^{\omega^{op}}$) and 
$\tilde{t}:cst(Id)\rightarrow \tilde{T}$ defined above 
(a map of $\mathcal{V}^{\omega^{op}}$).
Then, setting $$\tilde{P}_{n}=chocolim_{\omega}
(Id\rightarrow \tilde{T}_{n} \rightarrow \tilde{T}_{n}^{2} 
\rightarrow \tilde{T}_{n}^{3}\rightarrow ...)$$
defines an object of $\mathcal{V}$ and there are natural 
maps $\tilde{q}_{n+1}:\tilde{P}_{n+1}\Rightarrow 
\tilde{P}_{n}$, so that we obtain an object 
$\tilde{P}$ of $\mathcal{V}^{\omega^{op}}$. If 
$\tilde{p}_{n}:Id\Rightarrow \tilde{P}_{n}$ is the natural map, 
then clearly $\tilde{q}_{n+1}\tilde{p}_{n+1}=\tilde{p}_{n}$, 
that is, $\tilde{p}:cst(Id)\Rightarrow \tilde{P}$.
\begin{lem}
Suppose that the terminal object of $\mathcal{C}$ is 
cofibrant and $F\in Fun(\mathcal{C},\mathcal{D})$ 
preserves weak equivalences between cofibrant objects.
Then the natural map of towers $\{\tilde{P}_{n}F\}
\rightarrow \{P_{n}F\}$ is a weak equivalence when 
evaluated at cofibrant objects.
\end{lem}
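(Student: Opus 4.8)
The plan is to reduce the statement about the towers $\{\tilde P_n F\}$ and $\{P_n F\}$ to a statement comparing, for each fixed finite set $S$, the functors $S\star^h(-)$ and $S\star(-)$ after evaluation by $F$ at cofibrant objects. First I would observe that all of $\tilde T_n$, $T_n$, and hence $\tilde P_n$, $P_n$, are built from $\star^h$ respectively $\star$ by applying $F$, then taking $cholim$ over $\mathcal P_0(\underline{n+1})$, then iterating and taking $chocolim$ over $\omega$; since $cholim$ and $chocolim$ are homotopy-invariant for diagrams that are objectwise weakly equivalent (here using that $\mathcal D$ is a simplicial model category and these are the corrected constructions), it suffices to exhibit a natural map $S\star^h X\to S\star X$ which, for $X$ cofibrant, becomes a weak equivalence after applying $F$. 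Actually the cleanest route is to produce a natural weak equivalence $S\star^h X\to S\star X$ on cofibrant $X$ directly in $\mathcal C$, and then invoke the hypothesis that $F$ preserves weak equivalences between cofibrant objects, noting that $S\star X$ and $S\star^h X$ are cofibrant when $X$ is (because the terminal object is cofibrant, so $R_S X$ and $\tilde R_S X$ are objectwise cofibrant, and $hocolim$ of an objectwise-cofibrant diagram over the appropriate Reedy category is cofibrant).

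Second, I would identify the comparison map. By the displayed formulas in 4.1, $S\star X=hocolim_{S_+}R_S X$ while $S\star^h X=hocolim_{\underset{po}\int S}\tilde R_S X$, and from Example 1.2 (the dual) together with the isomorphism $\xi_S:\underset{po}\int S\xrightarrow{\ \cong\ }(S_+)_\tau$ one has a canonical functor from $\underset{po}\int S$ over $S_+$ (via the twisted-arrow category and the projection $(S_+)_\tau\to S_+$). This furnishes a natural map of diagrams, hence a natural map $hocolim_{\underset{po}\int S}\tilde R_S X\to hocolim_{S_+}R_S X$, i.e. $S\star^h X\to S\star X$. I expect this to be exactly the map inducing $\{\tilde P_n F\}\to\{P_n F\}$ already referred to in the excerpt, and I would check compatibility with the structure maps $\tau_0^{n+1}$ and with the $\omega$-towers so that it assembles to a map of towers.

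Third, I would prove this comparison map is a weak equivalence on cofibrant $X$. Here the key input is that $\underset{po}\int S$ is a Reedy category with fibrant constants (stated after Lemma 1.6), equivalently its relevant slices are contractible, so that homotopy colimits over it can be computed cofinally; more concretely, I would use the fact (Hirschhorn 19.6.x, which is available since these are the standard $hocolim$'s) that a functor between indexing categories which is homotopy cofinal induces a weak equivalence on homotopy colimits of objectwise-cofibrant diagrams — and the values $\tilde R_S X(1,j)=X$, $\tilde R_S X(0,k)=\ast$, $\tilde R_S X(2,\ast)=N(S_+)^{op}\circledast X\simeq X$ (the last because $N(S_+)$ is contractible, $S_+$ having an initial object) match the values of $R_S X$ up to weak equivalence in a way compatible with the projection $\underset{po}\int S\to S_+$. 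So the map of diagrams is objectwise a weak equivalence between objectwise-cofibrant diagrams over suitable Reedy categories, whence a weak equivalence on $hocolim$.

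The main obstacle I anticipate is the bookkeeping in the third step: verifying that $N(S_+)^{op}\circledast X\to X$ is the \emph{correct} map (the one induced by $\emptyset\to S_+$, collapsing the contractible nerve) and that under $\xi_S$ and $K$ the diagram map $\tilde R_S X\Rightarrow (\text{pullback of }R_S X)$ is genuinely objectwise a weak equivalence — in particular at the body $(2,\ast)$ of the spider — rather than merely componentwise defined. Once that naturality square is pinned down, homotopy-invariance of $cholim$, $chocolim$ and the Reedy/cofinality machinery close the argument routinely; the only remaining care is to carry the cofibrancy hypothesis through $R_S$, $\tilde R_S$, $hocolim$, and the iterates so that $F$'s hypothesis applies at every stage of the two towers.
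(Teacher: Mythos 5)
Your reduction to the single comparison map $S\star^{h}X\rightarrow S\star X$ for cofibrant $X$ is exactly the paper's first step, but from there you take a genuinely different route. The paper identifies $S\star X$ with the strict colimit $colim_{\underset{po}\int S}\tilde{R}_{S}X$ via the coend/twisted-arrow formula of 4.1, so that the comparison map is literally the canonical map from the homotopy colimit to the colimit of the single diagram $\tilde{R}_{S}X$; it then applies \cite[19.9.1(1)]{Hi}, which says that over a Reedy category with fibrant constants this map is a weak equivalence provided the diagram is Reedy cofibrant, and the Reedy cofibrancy of the spider $\tilde{R}_{S}X$ comes down to the latching map at the body, $\underset{S}\coprod X\rightarrow N(S_{+})^{op}\circledast X$, being a cofibration (plus cofibrancy of $X$ and of $\ast$). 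You instead factor the comparison through the projection $\pi:\underset{po}\int S\rightarrow S_{+}$, using homotopy cofinality of $\pi$ together with the objectwise weak equivalence $\tilde{R}_{S}X\Rightarrow R_{S}X\circ\pi$ whose only nontrivial component is the collapse $N(S_{+})^{op}\circledast X\rightarrow X$. Both arguments are sound and consume the hypotheses in the same places; the paper's is shorter and makes the ``natural map'' unambiguous (it is $hocolim\rightarrow colim$), while yours trades the Reedy-cofibrancy computation for a cofinality check ($(b\downarrow\pi)$ has contractible nerve for each $b\in S_{+}$: it is all of the tree-shaped spider for $b=\emptyset$ and a point for $b=j\in S$) and for the verification, which you rightly flag, that your composite agrees with the map actually used to define $\{\tilde{P}_{n}F\}\rightarrow\{P_{n}F\}$. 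One small caution: the fibrant-constants property of $\underset{po}\int S$ quoted after Lemma 1.6 is what the paper's route needs and is not the same thing as the homotopy cofinality of $\pi$ that your route needs; the latter is true but must be checked separately, as above.
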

\begin{proof}
Let $X\in \mathcal{C}$ be cofibrant. By hypothesis, it 
suffices to show that for every finite set $S$, the map 
$S\star^{h} X\rightarrow S\star X$ is a weak equivalence. 
For this, it suffices by \cite[19.9.1(1)]{Hi} to show that 
$\tilde{R}_{S}X$ is Reedy cofibrant. $\tilde{R}_{S}X$
is a spider in $\mathcal{C}$ with $|S|$ legs (Example 1.5)
whose body is $N( S_{+})^{op}\circledast X$. It
is cofibrant since the natural map $\underset{S}\coprod 
X\rightarrow N( S_{+})^{op}\circledast X$ is a cofibration.
\end{proof}
It seems natural to us to relate Rezk's proof \cite{Re}
of Goodwillie's $n$-excisive approximation theorem 
\cite[Theorem 1.8]{Go3} to the tower $\{\tilde{P}_{n}F\}$.
Let $\mathcal{X}:\mathcal{P}(\underline{n+1})
\rightarrow \mathcal{C}$. For $U,T\in 
\mathcal{P}(\underline{n+1})$ we have 
$$U\star^{h} \mathcal{X}(T)=
hocolim_{\underset{po}\int U}\tilde{R}_{U}\mathcal{X}(T)$$
Following Rezk, we define $\mathcal{X}^{2}:
\mathcal{P}(\underline{n+1})\times 
\mathcal{P}(\underline{n+1})\rightarrow \mathcal{C}$
as $$\mathcal{X}^{2}(U,T)=hocolim_{\underset{po}\int U}
\underline{\mathcal{X}}_{T}$$
where $\underline{\mathcal{X}}_{T}(0,s)=
\mathcal{X}(T\cup \{s\})$, $\underline{\mathcal{X}}_{T}(1,s)
=\mathcal{X}(T)$ and $\underline{\mathcal{X}}_{T}(2,\ast)=
\mathcal{X}(T)$. We have natural maps
$$\mathcal{X}^{2}(U,T)\longrightarrow U\star^{h} \mathcal{X}(T)$$
and $$\mathcal{X}_{U}(T)\overset{def}=hocolim(
\mathcal{X}(T)\leftarrow \underset{U}\coprod \mathcal{X}(T)
\rightarrow \underset{U}\coprod\mathcal{X}(T\cup \{s\}))
\longrightarrow \mathcal{X}^{2}(U,T)$$

\section{Homotopy limits and colimits II}

This section is a continuation of section 2.
We review here more elaborated results 
on (corrected) homotopy limits and colimits
in simplicial model categories. 

\subsection{Homotopy limits}
Recall that, in a simplicial model category, a diagram 
\[
\xymatrix{
W \ar[r] \ar[d] & X \ar[d]\\
Z \ar[r] & Y\\
}
  \] 
with $X,Y$ and $Z$ fibrant objects is homotopy Cartesian 
in the sense of  \cite[Definition 1.3]{Go2} if and only if it is a 
homotopy pullback in the sense of \cite[Chapter 7]{Ho1}.

Let $\mathcal{C}$ be a pointed simplicial model category. If 
$g:X\rightarrow Y$ is a map in $\mathcal{C}$, we denote by 
{\rm chf}$(g)$ the corrected homotopy limit of the diagram 
$X\overset{g}\rightarrow Y\leftarrow 0$. This defines a
functor ${\rm chf}:\mathcal{C}^{[1]}\rightarrow \mathcal{C}$. 
For each small category $J$, there is then a functor ${\rm chf}:
(\mathcal{C}^{J})^{[1]}\rightarrow \mathcal{C}^{J}$
given as ${\rm hf}(g)_{j}={\rm chf}(g_{j})$. We denote by 
{\rm hf}$(g)$ the homotopy limit of the diagram 
$X\overset{g}\rightarrow Y\leftarrow 0$. This defines a
functor ${\rm hf}:\mathcal{C}^{[1]}\rightarrow \mathcal{C}$. 

Let $F:\mathcal{C}\rightarrow \mathcal{D}$ be a functor
between pointed simplicial model categories.
We have maps $$F{\rm chf}(g)\rightarrow FX\overset{ch}\times_{FY} 
F0 \leftarrow {\rm chf}(Fg)$$ where $FX\overset{ch}\times_{FY} F0$
is the corrected homotopy limit of the diagram 
$FX\overset{Fg}\rightarrow FY\leftarrow F0$.
The two maps displayed above are weak equivalences
if, for example, $F0\rightarrow 0$ is a weak equivalence 
and $F$ sends homotopy pullback diagrams of the form 
\[
   \xymatrix{
A \ar[r] \ar[d] & 0 \ar[d]\\
B \ar[r] & C\\
}
  \]
to homotopy pullbacks.

The next result is closely related to \cite[Lemma 1.3.2(c)]{Sc}.
\begin{lem}
Let $I$ be a small filtered category and 
$\mathcal{C}$ a locally finitely presentable simplicial 
model category whose tensor, viewed as a functor
${\bf S}\times \mathcal{C}\rightarrow \mathcal{C}$, 
preserves finitely presentable objects, and such that an 
$I$-indexed colimit of weak equivalences of $\mathcal{C}$ 
is a weak equivalence and an $I$-indexed 
colimit of fibrant objects of $\mathcal{C}$ is fibrant. 
Let $J$ be a finite category such that for each object 
$j$ of $J$, the nerve of $(J\downarrow j)$ is finitely 
presentable. Then for every $\mathcal{X}:I\rightarrow 
\mathcal{C}^{J}$, the natural map $$colim_{I}
cholim_{J}\mathcal{X}\rightarrow cholim_{J}
(colim_{I}\mathcal{X})$$ is a weak equivalence. 
\end{lem}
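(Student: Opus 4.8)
The plan is to reduce the statement about corrected homotopy limits to a statement about ordinary (uncorrected) homotopy limits, and then to unwind $cholim_{J}$ as an ordinary limit over the twisted arrow category (or equivalently as a finite limit built from cosimplicial replacements), at which point the hypothesis that filtered colimits in $\mathcal{C}$ commute with the finitely many relevant constructions does the work. First I would recall that $cholim_{J}\mathcal{X} = holim_{J}(\hat{F}\mathcal{X})$ where $\hat{F}$ is the chosen simplicial fibrant approximation functor; since such a functor can be built from a functorial factorization, and functorial factorizations in a locally finitely presentable model category can be chosen to commute with filtered colimits (the small object argument applied to a set of maps with finitely presentable domains and codomains, which exists because the tensor preserves finitely presentable objects and $J$ is finite so the nerves involved are finitely presentable by hypothesis), one gets $colim_{I}\hat{F}\mathcal{X} \xrightarrow{\sim} \hat{F}(colim_{I}\mathcal{X})$ objectwise — using here that an $I$-indexed colimit of weak equivalences is a weak equivalence and of fibrant objects is fibrant, so the right-hand side is again a (possibly non-functorial, but weakly equivalent) fibrant approximation. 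This reduces the problem to showing $colim_{I}\, holim_{J}\mathcal{Y} \to holim_{J}(colim_{I}\mathcal{Y})$ is a weak equivalence for $\mathcal{Y}$ objectwise fibrant.

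Next I would use the explicit formula for $holim_{J}$ from \cite[18.1.8]{Hi}: it is an equalizer of a product indexed by objects of $J$ of cotensors $\mathcal{Y}(j)^{N(J\downarrow j)}$ against a product indexed by morphisms of $J$. Since $J$ is finite, both products are finite, so they commute with the filtered colimit $colim_{I}$. For each object $j$, the cotensor $(-)^{N(J\downarrow j)}$ is right adjoint to $N(J\downarrow j)\circledast(-)$, and by hypothesis $N(J\downarrow j)$ is finitely presentable in $\mathbf{S}$ and the tensor preserves finitely presentable objects; a standard adjunction argument then shows that $(-)^{N(J\downarrow j)}$ commutes with filtered colimits in $\mathcal{C}$ — this is exactly the content of \cite[Lemma 1.3.2(c)]{Sc} referenced before the statement. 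Finite limits (the equalizer) commute with filtered colimits in any category. Assembling these, the natural map $colim_{I}\, holim_{J}\mathcal{Y} \to holim_{J}(colim_{I}\mathcal{Y})$ is in fact an isomorphism of objects of $\mathcal{C}$.

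Finally I would check that this isomorphism is the correct comparison map and then combine with the first reduction: the map in question factors as $colim_{I}\,holim_{J}(\hat{F}\mathcal{X}) \xrightarrow{\cong} holim_{J}(colim_{I}\hat{F}\mathcal{X}) \to holim_{J}(\hat{F}(colim_{I}\mathcal{X}))$, where the first map is the isomorphism just established and the second is $holim_{J}$ applied to the objectwise weak equivalence $colim_{I}\hat{F}\mathcal{X}\xrightarrow{\sim}\hat{F}(colim_{I}\mathcal{X})$ between objectwise-fibrant diagrams, hence a weak equivalence by \cite[18.5.3]{Hi}. I expect the main obstacle to be the first paragraph: one must arrange the fibrant approximation functor $\hat{F}$ so that it commutes with $I$-indexed colimits up to weak equivalence, which requires choosing the small object argument with a generating set of maps whose domains and codomains are finitely presentable and then invoking the hypotheses on $I$-colimits of weak equivalences and of fibrant objects to identify $colim_{I}\hat{F}\mathcal{X}$ with a fibrant approximation of $colim_{I}\mathcal{X}$; the homotopy-limit bookkeeping in the second paragraph is routine by comparison.
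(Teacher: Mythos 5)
Your proposal is correct and follows essentially the same route as the paper: factor the comparison map as $colim_{I}holim_{J}\hat{F}\mathcal{X}\cong holim_{J}(colim_{I}\hat{F}\mathcal{X})\rightarrow holim_{J}\hat{F}(colim_{I}\mathcal{X})$, identify the first map as an isomorphism because $holim_{J}$ (a finite limit of cotensors by finitely presentable simplicial sets, i.e.\ the content of Lemma 5.2) commutes with filtered colimits, and handle the second by observing that $colim_{I}\hat{F}\mathcal{X}\rightarrow\hat{F}(colim_{I}\mathcal{X})$ is an objectwise weak equivalence of objectwise fibrant diagrams, by two-out-of-three applied to the triangle under $colim_{I}\mathcal{X}$ together with the hypotheses on $I$-indexed colimits of weak equivalences and of fibrant objects. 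The only caveat is that your parenthetical appeal to a small-object-argument fibrant replacement built from maps with finitely presentable domains and codomains is neither justified by the stated hypotheses (local finite presentability of $\mathcal{C}$ does not provide such a generating set, and the finiteness of $J$ is irrelevant to it) nor needed, since the two-out-of-three argument you also give already suffices.
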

\begin{proof}
The natural map is the composite
$$colim_{I}cholim_{J}\mathcal{X}=
colim_{I}holim_{J}\hat{F}\mathcal{X}\overset{\cong}
\longrightarrow holim_{J}(colim_{I}\hat{F}\mathcal{X})
\rightarrow holim_{J}\hat{F}(colim_{I}\mathcal{X})
=cholim_{J}(colim_{I}\mathcal{X})$$
The first map is an isomorphism by Lemma 5.2.
The second map can be seen to be a weak equivalence 
using the commutative diagram
\[
   \xymatrix{
colim_{I}\hat{F}\mathcal{X} \ar[rr] & &
\hat{F}(colim_{I}\mathcal{X})\\
&  colim_{I}\mathcal{X} \ar[ul] \ar[ur]\\
}
  \]
and the other assumptions on $\mathcal{C}$.
\end{proof}
\begin{lem}
Let $\mathcal{C}$ be a locally finitely presentable 
simplicial model category whose tensor, viewed as a 
functor ${\bf S}\times \mathcal{C}\rightarrow \mathcal{C}$, 
preserves finitely presentable objects. Let $J$ be a finite 
category such that for each object $j$ of $J$, the nerve of 
$(J\downarrow j)$ is finitely presentable. Then 
$holim_{J}$ preserves filtered colimits.
\end{lem}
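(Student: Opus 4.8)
The plan is to realize $holim_{J}$ as a finite limit, taken objectwise in $\mathcal{C}$, of functors $\mathcal{C}^{J}\to\mathcal{C}$ that visibly preserve filtered colimits, and then to invoke the fact that in a locally finitely presentable category finite limits commute with filtered colimits.

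First I would unwind the definition. By \cite[18.1.8]{Hi}, for $\mathcal{X}\in\mathcal{C}^{J}$ the object $holim_{J}\mathcal{X}$ is the equalizer of the two standard maps
$$\prod_{j}\mathcal{X}(j)^{N(J\downarrow j)}\;\rightrightarrows\;\prod_{j\to j'}\mathcal{X}(j')^{N(J\downarrow j)},$$
where $j$ runs over the (finitely many) objects of $J$, $j\to j'$ over the (finitely many) morphisms of $J$, and $(-)^{(-)}$ denotes the simplicial cotensor of $\mathcal{C}$. Both products are thus finite. Consequently, regarded as a functor $\mathcal{C}^{J}\to\mathcal{C}$, $holim_{J}$ is built from the functors $\mathcal{X}\mapsto\mathcal{X}(j)^{N(J\downarrow j)}$ and $\mathcal{X}\mapsto\mathcal{X}(j')^{N(J\downarrow j)}$ by forming a finite product and then an equalizer, all computed objectwise in $\mathcal{C}$.

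The crux is that each of these building blocks preserves filtered colimits. Since the evaluation functors $ev_{j}\colon\mathcal{C}^{J}\to\mathcal{C}$ preserve all colimits, it is enough to show that for a finitely presentable simplicial set $K$ the cotensor $(-)^{K}\colon\mathcal{C}\to\mathcal{C}$ preserves filtered colimits; by hypothesis each $K=N(J\downarrow j)$ is finitely presentable. Now $(-)^{K}$ is right adjoint to $K\circledast(-)\colon\mathcal{C}\to\mathcal{C}$, and the hypothesis on the tensor says precisely that $K\circledast(-)$ carries finitely presentable objects to finitely presentable objects. Hence, for a finitely presentable object $c$ and a filtered colimit $colim_{i}Y_{i}$ in $\mathcal{C}$, the adjunction isomorphisms, together with the finite presentability of $c$ and of $K\circledast c$, give natural isomorphisms
\begin{align*}
Hom_{\mathcal{C}}\bigl(c,colim_{i}(Y_{i}^{K})\bigr)
&\cong colim_{i}Hom_{\mathcal{C}}(c,Y_{i}^{K})
\cong colim_{i}Hom_{\mathcal{C}}(K\circledast c,Y_{i})\\
&\cong Hom_{\mathcal{C}}\bigl(K\circledast c,colim_{i}Y_{i}\bigr)
\cong Hom_{\mathcal{C}}\bigl(c,(colim_{i}Y_{i})^{K}\bigr),
\end{align*}
and this composite is the isomorphism on $Hom_{\mathcal{C}}(c,-)$ induced by the canonical comparison map $colim_{i}(Y_{i}^{K})\to(colim_{i}Y_{i})^{K}$. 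Since the finitely presentable objects form a strong generator of $\mathcal{C}$, this comparison is an isomorphism, so $(-)^{K}$ preserves filtered colimits.

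To conclude, a composite of an all-colimit-preserving functor with a filtered-colimit-preserving functor preserves filtered colimits, so each $\mathcal{X}\mapsto\mathcal{X}(j)^{N(J\downarrow j)}$ preserves filtered colimits. Because $\mathcal{C}$ is locally finitely presentable, finite limits commute with filtered colimits in $\mathcal{C}$, whence an objectwise finite limit of functors $\mathcal{C}^{J}\to\mathcal{C}$ that preserve filtered colimits again preserves filtered colimits; applying this to the equalizer-of-finite-products description of $holim_{J}$ finishes the proof. The only step carrying genuine content is the cotensor step, and it is there that the two hypotheses are used in tandem: the finite presentability of $N(J\downarrow j)$ and the preservation of finitely presentable objects by the tensor are exactly what is needed to know that $K\circledast c$ is finitely presentable. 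Everything else is formal, resting only on the finiteness of $J$ and the local finite presentability of $\mathcal{C}$.
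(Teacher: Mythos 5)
Your proof is correct and follows essentially the route the paper only sketches: the equalizer-of-products presentation you use is precisely the limit over the twisted arrow category of $J$ that the paper alludes to, and your cotensor/tensor adjunction argument plus the commutation of finite limits with filtered colimits in a locally finitely presentable category are the ``adjunctions and standard properties'' the paper invokes. You have simply supplied the details the paper omits.
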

\begin{proof}
The fact that $\mathcal{C}$ is a \emph{model} category
is not relevant. Perhaps the simplest proof
is to write $holim_{J}$ as a limit indexed over the (upper)
twisted arrow category of $J$ and to use adjunctions 
and standard properties of locally presentable categories.
\end{proof}
We recall \cite[Lemma 4.3]{Ho2} that, in an almost finitely 
generated model category, an $\omega$-indexed colimit of 
weak equivalences is a weak equivalence and an 
$\omega$-indexed colimit of fibrant objects is fibrant. 
\\

Let now $p:\mathbb{E}\rightarrow \mathbb{B}$ a split fibration
between small categories. We denote by $\mathbb{E}_{b}$ the
fiber category over $b\in \mathbb{B}$ and by
$\iota_{b}:\mathbb{E}_{b}\rightarrow \mathbb{E}$ the natural 
functor. Let $\mathcal{C}$ be a simplicial model category and 
$\mathcal{X}:\mathbb{E}\rightarrow \mathcal{C}$ a functor.
We obtain a functor $$\mathbb{B}\rightarrow
({\bf Cat}// \mathcal{C}), \qquad
b\mapsto \mathcal{X}_{b}:=\mathcal{X}\iota_{b}$$
Therefore there is a natural map
$$cholim_{\mathbb{E}}\mathcal{X}\rightarrow 
holim_{\mathbb{B}}cholim_{\mathbb{E}_{b}}\mathcal{X}_{b}$$
For $b\in \mathbb{B}$ we denote by $q$ the natural functor
$(b\downarrow p)\rightarrow \mathbb{E}$.
\begin{theorem} \cite[A dual of theorem 26.8]{CS}
Let $p:\mathbb{E}\rightarrow \mathbb{B}$ a split fibration between
small Reedy categories with cofibrant constants such that
$(i)$ $p$ is a morphism of Reedy categories, $(ii)$ $(p^{\ast},
p_{\ast})$ is a Quillen pair and $(iii)$ for each $b\in \mathbb{B}$,
$(q_{!},q^{\ast})$ is a Quillen pair.
Let $\mathcal{C}$ be a simplicial model category
and $\mathcal{X}:\mathbb{E}\rightarrow \mathcal{C}$
a functor. Then the natural map $$cholim_{\mathbb{E}}\mathcal{X}
\rightarrow holim_{\mathbb{B}}cholim_{\mathbb{E}_{b}}
\mathcal{X}_{b}$$ is a weak equivalence.
\end{theorem}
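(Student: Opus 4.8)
The plan is to reduce the statement to a Fubini-type theorem for corrected homotopy limits, the homotopy-theoretic analogue of the classical fact that $\mathrm{lim}_{\mathbb{E}} \cong \mathrm{lim}_{\mathbb{B}} \mathrm{lim}_{\mathbb{E}_b}$ for a fibration $p$. First I would recall that the natural map in question factors through the Reedy structure: since $p$ is a morphism of Reedy categories and the pairs $(p^{*}, p_{*})$ and $(q_{!}, q^{*})$ are Quillen pairs, the functor $p_{*}: \mathcal{C}^{\mathbb{E}} \to \mathcal{C}^{\mathbb{B}}$ is a right Quillen functor for the Reedy model structures, and for each $b$ the fiber inclusion $\iota_b : \mathbb{E}_b \to \mathbb{E}$ together with $q : (b \downarrow p) \to \mathbb{E}$ controls how the pointwise right Kan extension computing $p_*$ is calculated. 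The key observation is that for a split fibration, $(b \downarrow p)$ contains $\mathbb{E}_b$ as a homotopy-cofinal (in fact, as the inclusion of a reflective/coreflective piece after the split-fibration structure is used) subcategory, so $\mathrm{holim}_{(b\downarrow p)} \mathcal{X}q \simeq \mathrm{holim}_{\mathbb{E}_b} \mathcal{X}_b$ on a suitably fibrant diagram.

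Next I would run the argument at the derived level. Take a Reedy-fibrant replacement $\mathcal{X} \to \hat{\mathcal{X}}$ in $\mathcal{C}^{\mathbb{E}}$, which may be taken compatible with the simplicial fibrant approximation $\hat{F}$ up to weak equivalence, so that $cholim_{\mathbb{E}}\mathcal{X} \simeq \mathrm{lim}_{\mathbb{E}}\hat{\mathcal{X}}$ and similarly $cholim_{\mathbb{E}_b}\mathcal{X}_b \simeq \mathrm{lim}_{\mathbb{E}_b}\hat{\mathcal{X}}_b$. Because $p_*$ is right Quillen, $p_*\hat{\mathcal{X}}$ is Reedy fibrant in $\mathcal{C}^{\mathbb{B}}$, and hypothesis $(iii)$ (that $(q_!, q^*)$ is a Quillen pair for each $b$) guarantees that the $b$-th value $(p_*\hat{\mathcal{X}})(b) = \mathrm{lim}_{(b\downarrow p)} \hat{\mathcal{X}}q$ computes the homotopy limit over $(b \downarrow p)$ correctly; combined with the cofinality of $\mathbb{E}_b$ inside $(b \downarrow p)$ this value is weakly equivalent to $cholim_{\mathbb{E}_b}\mathcal{X}_b$. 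Then $holim_{\mathbb{B}} cholim_{\mathbb{E}_b}\mathcal{X}_b \simeq \mathrm{lim}_{\mathbb{B}} p_*\hat{\mathcal{X}} = \mathrm{lim}_{\mathbb{E}}\hat{\mathcal{X}} \simeq cholim_{\mathbb{E}}\mathcal{X}$, and a diagram chase identifies this chain of equivalences with the natural map in the statement.

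The main obstacle will be the cofinality claim: showing that for each $b \in \mathbb{B}$ the inclusion $\mathbb{E}_b \hookrightarrow (b \downarrow p)$ is homotopy left cofinal (so that restriction along it preserves the homotopy limit of a Reedy-fibrant diagram). This is where the \emph{split} hypothesis on $p$ is essential: the splitting provides, for each object $(e, u : b \to p(e))$ of $(b\downarrow p)$, a chosen Cartesian-style lift landing in $\mathbb{E}_b$, functorially in a way that exhibits $\mathbb{E}_b$ as a deformation retract of $(b\downarrow p)$ up to a natural transformation, making the comma category $\left( (e,u) \downarrow \mathbb{E}_b \right)$ have contractible nerve. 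One must be careful that this is the correct variance for \emph{limits} (the dual situation to the opfibration case treated in \cite{CS}), which is exactly why the theorem is quoted as ``a dual of Theorem 26.8''. Once the cofinality is in hand, everything else is bookkeeping with Reedy model structures and the compatibility of $\hat{F}$ with Reedy fibrant replacement already used implicitly throughout section 2.
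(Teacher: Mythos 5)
The paper offers no proof of this statement at all: it is quoted as a dual of \cite[Theorem 26.8]{CS}, and the proof in that memoir runs through Chach\'olski--Scherer's machinery of bounded diagrams and pushdown functors for homotopy colimits over Grothendieck constructions. Your argument is therefore necessarily a different route, and it is the natural direct one: replace $\mathcal{X}$ by a Reedy fibrant $\hat{\mathcal{X}}$ and use cofibrant constants of $\mathbb{E}$ and $\mathbb{B}$ to trade $cholim$ for $lim$ of fibrant diagrams; use $lim_{\mathbb{E}}=lim_{\mathbb{B}}\circ p_{\ast}$ together with hypothesis $(ii)$ to keep $p_{\ast}\hat{\mathcal{X}}$ Reedy fibrant; identify $(p_{\ast}\hat{\mathcal{X}})(b)$ with $lim_{(b\downarrow p)}q^{\ast}\hat{\mathcal{X}}$ by the pointwise formula for right Kan extensions; and finish by homotopy cofinality of $\iota_{b}:\mathbb{E}_{b}\rightarrow (b\downarrow p)$. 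I find this sound in outline, and it has the advantage of staying entirely inside the Reedy/Quillen-pair framework the theorem's hypotheses are phrased in.

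Two points need tightening. First, the cofinality claim has the wrong variance as written: for homotopy \emph{limits} of objectwise fibrant diagrams one needs $\iota_{b}$ to be homotopy initial, i.e.\ $N(\iota_{b}\downarrow (e,u))$ contractible for every object $(e,u)$ of $(b\downarrow p)$, not $N((e,u)\downarrow \iota_{b})$. The splitting of $p$ does deliver exactly this: the chosen cartesian lifts give a functor $r(e,u)=u^{\ast}e$ right adjoint to $\iota_{b}$, whence $(\iota_{b}\downarrow (e,u))\cong (\mathbb{E}_{b}\downarrow u^{\ast}e)$ has a terminal object. So the mechanism you invoke is correct; only the comma category you name is the one relevant for colimits. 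Second, to pass from $lim_{(b\downarrow p)}q^{\ast}\hat{\mathcal{X}}$ (equivalently $lim_{\mathbb{E}_{b}}\hat{\mathcal{X}}_{b}$, since a functor with a right adjoint is already $1$-categorically initial) to the corresponding homotopy limit, you need the indexing category in question to be Reedy with cofibrant constants and the restricted diagram to be Reedy fibrant; hypothesis $(iii)$ supplies the fibrancy, but cofibrant constants for $(b\downarrow p)$ or for the fibers $\mathbb{E}_{b}$ is not among the stated hypotheses and must either be added or checked in the intended application (base $pb$, where it does hold). Finally, the closing ``diagram chase'' should verify that your zigzag of equivalences is natural in $b$ and agrees with the specific comparison map constructed from $b\mapsto \mathcal{X}_{b}$ before the theorem; this is routine but not automatic.
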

Theorem 5.3 will be applied to split fibrations with base $pb$.

\subsection{Homotopy colimits}
Let $I$ be a small filtered category and $\phi:I\rightarrow {\bf Cat}$ 
a functor. Let $J=colim_{I}\phi$ and let $u_{i}:\phi_{i}\rightarrow J$ be 
the canonical map $(i\in I)$. If $\mathcal{X}:J\rightarrow \mathcal{C}$ 
is a functor, then \cite[XII 3.5]{BK} $$hocolim_{J}\mathcal{X}\cong 
colim_{I}(hocolim_{\phi_{i}}\mathcal{X}/i)$$
where $\mathcal{X}/i$ is the composite $\phi_{i}\overset{u_{i}}
\rightarrow J\overset{\mathcal{X}}\rightarrow \mathcal{C}$. 
In particular, let $J$ be a small Reedy category and $F^{i}J$ 
the $i$-filtration of $J$ \cite[15.1.22]{Hi}. Consider 
$\phi:\omega\rightarrow {\bf Cat}$, $\phi_{i}=F^{i}J$. 
Then $J=colim_{\omega}\phi$ \cite[15.1.25]{Hi},
hence $hocolim_{J}\mathcal{X}\cong 
colim_{\omega}(hocolim_{F^{i}J}\mathcal{X}/i)$.
\begin{lem}
Let $F:\mathcal{C}\rightarrow \mathcal{D}$ be a
functor between simplicial model categories
which preserves weak equivalences. Let $I$
be a small filtered category such that an $I$-indexed 
colimit of weak equivalences of $\mathcal{D}$
is a weak equivalence. Suppose that $F$
preserves $I$-indexed colimits. Then for every
$\mathcal{X}:I\rightarrow \mathcal{C}$,
$chocolim_{I}F\mathcal{X}$ and 
$F(chocolim_{I}\mathcal{X})$ are weakly
equivalent. If $\mathcal{C}=\mathcal{D}$ and 
there is a natural transformation $Id\Rightarrow F$
which is objectwise a weak equivalence, the 
requirement that $F$ preserves $I$-indexed 
colimits can be dropped.
\end{lem}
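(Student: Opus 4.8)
The plan is to unwind $chocolim_{I}=hocolim_{I}\circ\tilde{C}$, reduce both homotopy colimits appearing in the statement to ordinary colimits over $I$, and then use that $F$ commutes with $I$-indexed colimits; the last assertion will instead be obtained directly from the natural transformation $Id\Rightarrow F$. The facts I will take for granted are: a functorial cofibrant approximation $\tilde{C}$ carries objectwise weak equivalences to objectwise weak equivalences between objectwise cofibrant diagrams; $hocolim_{I}$ carries objectwise weak equivalences between objectwise cofibrant diagrams to weak equivalences \cite{Hi}, hence $chocolim_{I}$ carries objectwise weak equivalences to weak equivalences; and, for $I$ filtered and $\mathcal{Z}:I\rightarrow\mathcal{M}$ objectwise cofibrant in a simplicial model category $\mathcal{M}$, the canonical map $hocolim_{I}\mathcal{Z}\rightarrow colim_{I}\mathcal{Z}$ is a weak equivalence (cf. \cite[XII~3.5]{BK}). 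In $\mathcal{D}$ this last fact, together with the hypothesis that $I$-indexed colimits preserve weak equivalences, yields for every $\mathcal{W}:I\rightarrow\mathcal{D}$ a chain of weak equivalences $chocolim_{I}\mathcal{W}=hocolim_{I}\tilde{C}\mathcal{W}\rightarrow colim_{I}\tilde{C}\mathcal{W}\rightarrow colim_{I}\mathcal{W}$.

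For the first assertion I would set $\mathcal{Y}=\tilde{C}\mathcal{X}$, so that $\mathcal{Y}$ is objectwise cofibrant and $\mathcal{Y}\rightarrow\mathcal{X}$ is an objectwise weak equivalence, and then write
$$F(chocolim_{I}\mathcal{X})=F(hocolim_{I}\mathcal{Y})\longrightarrow F(colim_{I}\mathcal{Y})=colim_{I}F\mathcal{Y}\longrightarrow colim_{I}F\mathcal{X},$$
in which the first arrow is a weak equivalence because $hocolim_{I}\mathcal{Y}\rightarrow colim_{I}\mathcal{Y}$ is one and $F$ preserves weak equivalences, the middle equality is the assumption that $F$ preserves $I$-indexed colimits, and the last arrow is a weak equivalence because $F\mathcal{Y}\rightarrow F\mathcal{X}$ is an objectwise weak equivalence and $I$-indexed colimits preserve weak equivalences in $\mathcal{D}$. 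Applying the chain from the first paragraph to $\mathcal{W}=F\mathcal{X}$ gives a weak equivalence $chocolim_{I}F\mathcal{X}\rightarrow colim_{I}F\mathcal{X}$. Thus both $F(chocolim_{I}\mathcal{X})$ and $chocolim_{I}F\mathcal{X}$ are weakly equivalent to $colim_{I}F\mathcal{X}$, hence to each other.

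For the last assertion I drop the requirement that $F$ preserve $I$-indexed colimits but I am given $\mathcal{C}=\mathcal{D}$ and a natural transformation $\eta:Id\Rightarrow F$ that is objectwise a weak equivalence. Then $\eta$ evaluated at the object $chocolim_{I}\mathcal{X}$ is a weak equivalence $chocolim_{I}\mathcal{X}\rightarrow F(chocolim_{I}\mathcal{X})$, while $\eta$ whiskered with $\mathcal{X}:I\rightarrow\mathcal{C}$ is an objectwise weak equivalence $\mathcal{X}\Rightarrow F\mathcal{X}$, so applying the functor $chocolim_{I}$, which preserves objectwise weak equivalences, produces a weak equivalence $chocolim_{I}\mathcal{X}\rightarrow chocolim_{I}F\mathcal{X}$. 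Hence $F(chocolim_{I}\mathcal{X})$ and $chocolim_{I}F\mathcal{X}$ are again joined by a zig-zag of weak equivalences through $chocolim_{I}\mathcal{X}$; this argument uses only the homotopy invariance of $chocolim_{I}$, neither the filteredness of $I$ nor the behaviour of $I$-indexed colimits.

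The only step that is not pure bookkeeping is the comparison of $hocolim_{I}$ with $colim_{I}$ over the filtered category $I$: I expect the main care to lie in invoking it correctly — only the objectwise cofibrant case is ever used, and on the $\mathcal{D}$-side it is strengthened by the stated hypothesis on $I$-indexed colimits — while the manipulations involving $F\tilde{C}\mathcal{X}$, $\tilde{C}F\mathcal{X}$ and $F\mathcal{X}$ and the reductions through $\tilde{C}$ are routine.
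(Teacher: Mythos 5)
The second half of your argument (the zig-zag through $chocolim_{I}\mathcal{X}$ built from $\eta:Id\Rightarrow F$) is correct, and it correctly isolates that only the homotopy invariance of $chocolim_{I}$ is needed there. The first half, however, rests on a ``fact'' you are not entitled to: that for filtered $I$ and objectwise cofibrant $\mathcal{Y}:I\rightarrow\mathcal{C}$ the canonical map $hocolim_{I}\mathcal{Y}\rightarrow colim_{I}\mathcal{Y}$ is a weak equivalence \emph{in $\mathcal{C}$}. This is not a general property of simplicial model categories, and \cite[XII 3.5]{BK} does not assert it; it only gives the isomorphism $hocolim_{I}\mathcal{Y}\cong colim_{I}(hocolim_{(I\downarrow i)}\mathcal{Y}/i)$. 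To pass from the right-hand side to $colim_{I}\mathcal{Y}(i)$ one must take an $I$-indexed colimit of the weak equivalences $hocolim_{(I\downarrow i)}\mathcal{Y}/i\rightarrow\mathcal{Y}(i)$ (each of which is a weak equivalence by \cite[19.6.8(1)]{Hi}, since $(I\downarrow i)$ has a terminal object), and that step needs exactly the hypothesis that $I$-indexed colimits of weak equivalences are weak equivalences --- which the lemma imposes on $\mathcal{D}$ only, not on $\mathcal{C}$. Note also that your ``fact'', combined with the homotopy invariance of $hocolim_{I}$ and two-out-of-three, would force $colim_{I}$ to preserve weak equivalences between objectwise cofibrant diagrams; so it is essentially equivalent to the very hypothesis you lack for $\mathcal{C}$. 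Consequently the arrow $F(hocolim_{I}\mathcal{Y})\rightarrow F(colim_{I}\mathcal{Y})$ in your chain is not known to be a weak equivalence.

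The repair is a change in the order of operations, and it is what the paper does: apply $F$ \emph{before} collapsing the filtered homotopy colimit. Write $F(hocolim_{I}\tilde{C}\mathcal{X})\cong F(colim_{I}(hocolim_{(I\downarrow i)}\tilde{C}\mathcal{X}/i))\cong colim_{I}F(hocolim_{(I\downarrow i)}\tilde{C}\mathcal{X}/i)$, where the second isomorphism is where ``$F$ preserves $I$-indexed colimits'' is used, and only then map to $colim_{I}F\tilde{C}\mathcal{X}(i)$: each $F(hocolim_{(I\downarrow i)}\tilde{C}\mathcal{X}/i)\rightarrow F\tilde{C}\mathcal{X}(i)$ is $F$ applied to a weak equivalence of $\mathcal{C}$, hence a weak equivalence of $\mathcal{D}$ since $F$ preserves weak equivalences, and the $I$-indexed colimit of these is a weak equivalence by the hypothesis on $\mathcal{D}$. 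The rest of your manipulations --- everything concerning $chocolim_{I}F\mathcal{X}$, $\tilde{C}F\mathcal{X}$ and $F\tilde{C}\mathcal{X}$, which take place entirely in $\mathcal{D}$ --- are fine as written.
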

\begin{proof}
For the first part we have chains of arrows
$$hocolim_{I}\tilde{C}F\mathcal{X}\cong 
colim_{I}(hocolim_{(I\downarrow i)}\tilde{C}F\mathcal{X}/i)
\rightarrow colim_{I}\tilde{C}F\mathcal{X}(i)
\rightarrow colim_{I}F\mathcal{X}(i)\leftarrow 
colim_{I}F\tilde{C}\mathcal{X}(i)$$
and $$F(hocolim_{I}\tilde{C}\mathcal{X})\cong 
F(colim_{I}(hocolim_{(I\downarrow i)}\tilde{C}\mathcal{X}/i))
\overset{\cong}\leftarrow colim_{I}F(hocolim_{(I\downarrow i)}
\tilde{C}\mathcal{X}/i))\rightarrow colim_{I}F\tilde{C}\mathcal{X}(i)$$
Each arrow in the above chains is a weak equivalence 
by \cite[19.6.8(1)]{Hi} and hypothesis. For the second part,
the isomorphism in the second chain of arrows is a weak 
equivalence.
\end{proof}
Lemma 5.4 gives sufficient conditions for a functor 
to be `$I$-finitary' \cite[Definition 5.10]{Go3}. 
The next result, which in common parlance 
says that certain homotopy limits `commute' with 
filtered homotopy colimits, is a consequence of 
Lemmas 5.4 and 5.1.
\begin{corollary}
Let $I$ be a small filtered category and $\mathcal{C}$ 
a locally finitely presentable simplicial 
model category whose tensor, viewed as a functor
${\bf S}\times \mathcal{C}\rightarrow \mathcal{C}$, 
preserves finitely presentable objects, and that an 
$I$-indexed colimit of weak equivalences of $\mathcal{C}$ 
is a weak equivalence and an $I$-indexed 
colimit of fibrant objects of $\mathcal{C}$ is fibrant. 
Let $J$ be a finite category such that for each object 
$j$ of $J$, the nerve of $(J\downarrow j)$ is finitely 
presentable. Then for every $\mathcal{X}:I\rightarrow 
\mathcal{C}^{J}$,
$chocolim_{I}cholim_{J}\mathcal{X}$ and 
$cholim_{J}chocolim_{I}\mathcal{X}$ are weakly
equivalent.
\end{corollary}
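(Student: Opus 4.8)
The plan is to chain together the two previously-established results. First I would observe that, since all of the hypotheses of Corollary~5.6 coincide with those of Lemma~5.1, that lemma gives a weak equivalence
$$colim_{I}cholim_{J}\mathcal{X}\rightarrow cholim_{J}(colim_{I}\mathcal{X})$$
for every $\mathcal{X}:I\rightarrow\mathcal{C}^{J}$. Next, I would want to replace the underived colimits by corrected homotopy colimits on both sides. For this I would apply Lemma~5.4 twice, with $\mathcal{C}=\mathcal{D}$ and $F$ the functor $colim_{I}$: the hypothesis that an $I$-indexed colimit of weak equivalences is a weak equivalence is exactly what is assumed, and the natural transformation $chocolim_{I}\mathcal{Y}\rightarrow colim_{I}\mathcal{Y}$ coming from the cofibrant approximation is objectwise a weak equivalence when applied to objectwise-cofibrant diagrams, which is the situation after one corrects. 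So Lemma~5.4 yields that $chocolim_{I}cholim_{J}\mathcal{X}$ is weakly equivalent to $colim_{I}cholim_{J}\mathcal{X}$, and likewise $chocolim_{I}cholim_{J}\mathcal{X}\simeq colim_{I}chocolim_{J}\mathcal{X}$ is not quite what I want — I need to be a little careful here, since I actually want to relate $chocolim_{I}$ applied to $cholim_{J}\mathcal{X}$, and separately $cholim_{J}$ applied to $chocolim_{I}\mathcal{X}$.

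Concretely, the comparison I would assemble is the zig-zag
$$chocolim_{I}cholim_{J}\mathcal{X}\xrightarrow{\ \sim\ }colim_{I}cholim_{J}\mathcal{X}\xrightarrow{\ \sim\ }cholim_{J}(colim_{I}\mathcal{X})\xleftarrow{\ \sim\ }cholim_{J}(chocolim_{I}\mathcal{X}),$$
where the outer two maps are instances of Lemma~5.4 (for the rightmost one I also need that $cholim_{J}=holim_{J}\circ\hat{F}$ sends objectwise weak equivalences to weak equivalences, which is standard for homotopy limits in a simplicial model category, together with the fact that the map $chocolim_{I}\mathcal{X}\rightarrow colim_{I}\mathcal{X}$ is a weak equivalence — this last point is where I would want the observation, used already in the proof of Lemma~5.1, that $colim_{I}$ of the cofibrant-replaced diagram computes the correct homotopy colimit under the filteredness-plus-preservation hypotheses), and the middle map is Lemma~5.1 applied to $\tilde{C}\mathcal{X}$ in place of $\mathcal{X}$.

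The step I expect to be the main obstacle is verifying that the leftmost map is genuinely a weak equivalence, i.e.\ that $chocolim_{I}$ and $colim_{I}$ agree on the diagram $cholim_{J}\mathcal{X}:I\rightarrow\mathcal{C}$. This is where the full force of the local finite presentability, the tensor-preserves-finitely-presentable hypothesis, and the finiteness of $J$ with finitely presentable slice nerves is used: one needs $cholim_{J}\mathcal{X}(i)=holim_{J}\hat{F}\mathcal{X}(i)$ to be such that its values are controlled well enough for $colim_{I}$ to preserve the relevant weak equivalence, which by Lemma~5.4 (second part) reduces to knowing $colim_{I}$ takes the objectwise weak equivalence $chocolim_{I}(cholim_{J}\mathcal{X})\rightarrow colim_{I}(cholim_{J}\mathcal{X})$'s constituent maps to weak equivalences — precisely the hypothesis on $I$-indexed colimits of weak equivalences in $\mathcal{C}$. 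Once all four maps in the zig-zag are checked to be weak equivalences, composing them gives the asserted zig-zag weak equivalence between $chocolim_{I}cholim_{J}\mathcal{X}$ and $cholim_{J}chocolim_{I}\mathcal{X}$, completing the proof.
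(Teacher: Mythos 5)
Your proposal is correct and follows essentially the route the paper intends: the paper offers no written proof beyond declaring the corollary ``a consequence of Lemmas 5.4 and 5.1,'' and your zig-zag
$chocolim_{I}cholim_{J}\mathcal{X}\rightarrow colim_{I}cholim_{J}\mathcal{X}\rightarrow cholim_{J}(colim_{I}\mathcal{X})\leftarrow cholim_{J}(chocolim_{I}\mathcal{X})$,
with Lemma 5.1 supplying the middle arrow, is exactly that. The one imprecision is that the outer arrows are not literal instances of the \emph{statement} of Lemma 5.4 (whose $F$ is a functor $\mathcal{C}\rightarrow\mathcal{D}$, not $colim_{I}$); what you actually need, and what is extracted from the first chain of arrows in Lemma 5.4's \emph{proof}, is that $chocolim_{I}\mathcal{Y}\rightarrow colim_{I}\mathcal{Y}$ is a weak equivalence for any $\mathcal{Y}:I\rightarrow\mathcal{C}$ (respectively $\mathcal{C}^{J}$, objectwise) under the hypothesis that $I$-indexed colimits of weak equivalences are weak equivalences, after which the left arrow is immediate and the right arrow follows since $cholim_{J}$ preserves objectwise weak equivalences.
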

Let $\mathcal{C}$ and $\mathcal{D}$ be two 
simplicial model categories, $I$ a small category
and $F:I\rightarrow Fun(\mathcal{C},\mathcal{D})$.
We define $chocolim_{I}F:\mathcal{C}\rightarrow
\mathcal{D}$ as $(chocolim_{I}F)(X)=chocolim_{I}F(X)$.
\begin{corollary}
Let $\mathcal{C}$ and $\mathcal{D}$ be two 
simplicial model categories and $I$ a small filtered category. 
Suppose that $\mathcal{D}$ is locally presentable, that its 
tensor, viewed as a functor ${\bf S}\times \mathcal{D}
\rightarrow \mathcal{D}$, preserves finitely presentable 
objects, and that an $I$-indexed colimit of weak 
equivalences of $\mathcal{D}$ is a weak equivalence 
and an $I$-indexed colimit of fibrant objects of $\mathcal{D}$ 
is fibrant. Then, for every $F:I\rightarrow Fun(\mathcal{C},\mathcal{D})$,
$T_{n}(chocolim_{I}F)$ and $chocolim_{I}T_{n}F$ are objectwise 
weakly equivalent. Consequently, $P_{n}(chocolim_{I}F)$ and 
$chocolim_{I}P_{n}F$ are objectwise weakly equivalent. 
\end{corollary}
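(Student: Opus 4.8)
The plan is to unwind the two constructions $T_n$ and $chocolim_\omega$ and to reduce everything to Corollary 5.6, applied successively. First I would observe that $chocolim_I F$ is computed objectwise, so it suffices to fix $X\in\mathcal{C}$ and compare $T_n(chocolim_I F)(X)$ with $chocolim_I(T_n F)(X)$; by the very definition of $T_n$ in Section 3.2 these are, respectively, $cholim_{\mathcal{P}_0(\underline{n+1})} (chocolim_I F)((-)\star X)$ and $chocolim_I\, cholim_{\mathcal{P}_0(\underline{n+1})} F((-)\star X)$. Since $F((-)\star X):I\times\mathcal{P}_0(\underline{n+1})\to\mathcal{D}$ is a diagram of the shape to which Corollary 5.6 applies — with $J=\mathcal{P}_0(\underline{n+1})$, which is finite and for which each $N(J\downarrow S)$ is a finite simplicial set, hence finitely presentable in $\mathbf{S}$ — the hypotheses placed on $\mathcal{D}$ (locally presentable, tensor preserving finitely presentable objects, $I$-colimits preserving weak equivalences and fibrant objects) are exactly what Corollary 5.6 requires of the ambient category. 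So the first weak equivalence is an instance of Corollary 5.6.

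Next I would handle the $chocolim_\omega$ layer. Recall from Observation 3.1 and Section 3.2 that $P_n = chocolim_\omega(Id\to T_n\to T_n^2\to\cdots)$, so $P_n(chocolim_I F)(X) = chocolim_{k\in\omega} (T_n^k(chocolim_I F))(X)$ and $chocolim_I (P_n F)(X) = chocolim_I chocolim_{k\in\omega}(T_n^k F)(X)$. The first move is to interchange $chocolim_I$ with $chocolim_\omega$: this is again Corollary 5.6 (or directly Lemma 5.4 together with the remark recorded after its proof that an $\omega$-indexed colimit of weak equivalences is a weak equivalence in an almost finitely generated model category — one must check the locally presentable hypothesis on $\mathcal{D}$ suffices here, or simply quote Corollary 5.6 with $I$ filtered and $\omega$ in the role of the finite diagram after reindexing via the Reedy filtration as in Section 5.2). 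The second, and genuinely iterative, move is to show by induction on $k$ that $T_n^k(chocolim_I F)$ and $chocolim_I T_n^k F$ are objectwise weakly equivalent, so that the two $\omega$-towers are levelwise weakly equivalent and hence have weakly equivalent $chocolim_\omega$ (using that $chocolim_\omega$ is a homotopy-invariant construction, i.e.\ Lemma 5.4 applied to $Id$). The inductive step is the same commutation $T_n(chocolim_I(-))\simeq chocolim_I T_n(-)$ established in the previous paragraph, applied to $T_n^{k-1}F$ in place of $F$; one must only note that $T_n^{k-1}F$ again lands in $Fun(\mathcal{C},\mathcal{D})$ and that $chocolim_I$ of it is still formed objectwise, so the hypotheses persist.

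The main obstacle, I expect, is bookkeeping rather than any deep point: one must be careful that the weak equivalences at each stage are natural enough to be assembled into a weak equivalence of $\omega$-towers (so that applying $chocolim_\omega$ is legitimate), and one must verify that the hypothesis ``$I$-indexed colimit of fibrant objects is fibrant'' — needed to feed Corollary 5.6 — is genuinely available; since $\mathcal{D}$ is only assumed locally presentable (not almost finitely generated), this is part of the standing hypotheses and should simply be invoked. A secondary subtlety is that $T_n$ itself involves $(-)\star X$, and one should confirm that precomposition with $(-)\star X$ does not disturb the filteredness of $I$ or the preservation properties; it does not, since $(-)\star X$ only changes the indexing category $\mathcal{P}_0(\underline{n+1})$, which is fixed and finite. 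Once these checks are in place, the final sentence of the corollary — the statement for $P_n$ — follows formally by stringing together the two displayed weak equivalences and the inductive comparison of the towers.
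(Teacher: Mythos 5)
Your plan is essentially the intended argument: the paper states this corollary without proof, immediately after the commutation result for $cholim_{J}$ and filtered $chocolim_{I}$ (Corollary 5.5 in the paper's numbering) and after defining $chocolim_{I}F$ objectwise, so the $T_{n}$ statement is meant to be read off exactly as in your first paragraph (unwind $T_{n}$ and apply that corollary to the diagram $(i,S)\mapsto F(i)(S\star X)$ with $J=\mathcal{P}_{0}(\underline{n+1})$), and the $P_{n}$ statement is the formal consequence you describe. The one citation that does not fit is your proposed handling of the $chocolim_{I}$--$chocolim_{\omega}$ interchange: the preceding corollary concerns a homotopy \emph{limit} over a finite $J$, so it cannot be applied with $\omega$ in the role of the finite diagram, and the second part of Lemma 5.4 requires an endofunctor equipped with a natural transformation from the identity, which $chocolim_{\omega}:\mathcal{D}^{\omega}\rightarrow\mathcal{D}$ is not; what is actually needed is the standard Fubini-type fact that two (corrected) homotopy colimits commute up to natural weak equivalence, which is not among the results you cite but is unproblematic. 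With that substitution, and with property 3.4(3) supplying the homotopy invariance of $T_{n}$ needed to run your induction on $k$ and to assemble the levelwise zigzags into zigzags of $\omega$-towers, the argument goes through.
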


\section{Generalized homotopy Cartesian cubes}
In this section we give an analogue, in our context, of 
some useful results from \cite{Go2},\cite{Ku},
\cite{Ch} and \cite{Go3} which revolve around the notion 
of homotopy Cartesian cube \cite[Definition 1.3]{Go2}. 
\begin{definition}
Let $\mathcal{C}$ be a simplicial model category 
and $J$ a small category. An object $\mathcal{X}$ of 
$\mathcal{C}^{J_{+}}$ is {\bf homotopy Cartesian} if the 
natural map $$\mathcal{X}(\emptyset) \rightarrow 
cholim_{J}\mathcal{X}$$ is a weak equivalence.
\end{definition}
\begin{el}
(1) Let $\mathcal{X}:J \rightarrow \mathcal{C}$. 
Define $R\mathcal{X}:J_{+} \rightarrow \mathcal{C}$
as $$R\mathcal{X}(j)=
\begin{cases}
cholim_{J}\mathcal{X}, & \text{if }  j=\emptyset\\
\hat{F}\mathcal{X}(j), & \text{otherwise}
\end{cases} $$ 
Then $R\mathcal{X}$ is homotopy Cartesian.

(2) Let $F:\mathcal{C}\rightarrow \mathcal{D}$
be a functor between simplicial
model categories and $J$ a small category.
Suppose that $F$ preserves weak equivalences and homotopy 
Cartesian objects $J_{+}\rightarrow \mathcal{C}$.
Then for any $\mathcal{X}:J\rightarrow \mathcal{C}$,
$F(cholim_{J}\mathcal{X})$ and $cholim_{J}F\mathcal{X}$ are
weakly equivalent.

(3) Let $\mathcal{X},\mathcal{Y}:J_{+}\rightarrow  
\mathcal{C}$ be two functors.
If $\mathcal{X}\rightarrow \mathcal{Y}$ is an objectwise weak 
equivalence, then $\mathcal{X}$ is homotopy Cartesian if and 
only if $\mathcal{Y}$ is.

(4) Suppose that $J$ a small Reedy category with cofibrant constants. 
Let $\mathcal{X}\in \mathcal{C}^{J_{+}}$ be a fibrant object. 
Then \cite[19.9.1(2)]{Hi} the natural map $lim_{ J}\mathcal{X}
\rightarrow holim_{ J}\mathcal{X}$ is a weak equivalence, so 
in this case $\mathcal{X}$ is homotopy Cartesian if and only if 
$\mathcal{X}(\emptyset)\rightarrow lim_{J}\mathcal{X}$ is a 
weak equivalence.
\end{el}
\begin{repl} \cite[Remark 1.11]{Go2}
Let $J$ be a small Reedy category with cofibrant constants
and such that every map in the direct subcategory 
$\overrightarrow{J}$ is a monomorphism. The natural
functor $J\rightarrow J_{+}$ is an inclusion, let's call it $u$.
The functor $u^{\ast}:\mathcal{C}^{J_{+}}\rightarrow 
\mathcal{C}^{J}$ has a (full and faithful) right adjoint $u_{\ast}$ 
calculated as 
$$u_{\ast}\mathcal{X}(j)=
\begin{cases}
lim_{J}\mathcal{X}, & \text{if }  j=\emptyset\\
\mathcal{X}(j), & \text{otherwise}
\end{cases} $$ 
Let $\mathcal{X}:J\rightarrow \mathcal{C}$ be a fibrant object.
For each $j\in J$ we denote by $Q_{j}$ the composite $(j\downarrow J)
\rightarrow J\overset{\mathcal{X}}\rightarrow \mathcal{C}$.
Define $\mathcal{Z}:J_{+}\rightarrow \mathcal{C}$
as
$$\mathcal{Z}(j)=
\begin{cases}
holim_{J}\mathcal{X}, & \text{if }  j=\emptyset\\
holim_{(j\downarrow J)}Q_{j}, & \text{otherwise}
\end{cases} $$ 
Then $\mathcal{Z}$ is homotopy Cartesian and 
the natural map $\mathcal{X}\rightarrow u^{\ast}\mathcal{Z}$
is an objectwise weak equivalence.
\end{repl}
Let $\mathcal{C}$ be a simplicial model category,
$J$ a small category and 
$\mathcal{X}:(\underset{pb}\int J)_{+}
\rightarrow \mathcal{C}$. Using the isomorphism
(3) from Example 1.2 we can identify $\mathcal{X}$
with an arrow $\mathcal{X}_{left}\rightarrow 
\mathcal{X}_{right}$ of $\mathcal{C}^{J_{+}}$. 
If $J$ a Reedy category with cofibrant 
constants we have a homotopy pullback diagram
\[
\xymatrix{
cholim_{\underset{pb}\int J}\mathcal{X} \ar[r] \ar[d] & 
\hat{F}\mathcal{X}_{right}(\emptyset) \ar[d]\\
cholim_{ J}\mathcal{X}_{left} \ar[r] & 
cholim_{J}\mathcal{X}_{right}\\
}
  \]
\begin{lem} \cite[Proposition 1.6]{Go2}
Let $\mathcal{C}$ be a simplicial model category and $J$ a 
small Reedy category with cofibrant constants. Let 
$\mathcal{X}:(\underset{pb}\int J)_{+} \rightarrow \mathcal{C}$ 
be a functor.

$(i)$ If $\mathcal{X}_{left}$ and $\mathcal{X}_{right}$ are 
homotopy Cartesian then so is $\mathcal{X}$.

$(ii)$ If $\mathcal{X}$ and $\mathcal{X}_{right}$ are 
homotopy Cartesian then so is $\mathcal{X}_{left}$.
\end{lem}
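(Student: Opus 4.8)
The plan is to deduce both parts directly from the homotopy pullback square displayed immediately before the lemma, using only two standard facts about homotopy pullback squares in a simplicial model category: (a) a square both of whose vertical arrows are weak equivalences is a homotopy pullback; and (b) in a homotopy pullback square, one vertical arrow is a weak equivalence if and only if the parallel one is.

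First I would identify the basepoint. Under the isomorphism (3) from Example 1.2, $(\underset{pb}\int J)_{+}\cong J_{+}\times[1]$, the initial object of $(\underset{pb}\int J)_{+}$ corresponds to $(\emptyset,0)$, hence lies in the copy $J_{+}\times\{0\}$ on which $\mathcal X$ restricts to $\mathcal X_{left}$; thus $\mathcal X(\emptyset)=\mathcal X_{left}(\emptyset)$. Next, consider the commutative square $C$
\[
\xymatrix{
\mathcal{X}(\emptyset) \ar[r] \ar[d] & \hat{F}\mathcal{X}_{right}(\emptyset) \ar[d]\\
cholim_{J}\mathcal{X}_{left} \ar[r] & cholim_{J}\mathcal{X}_{right}\\
}
\]
in which the top arrow is the value at $\emptyset$ of $\mathcal X_{left}\Rightarrow\mathcal X_{right}$ followed by the fibrant approximation, the left arrow is the natural comparison map of Definition 6.1 for $\mathcal X_{left}$, the bottom arrow is $cholim_{J}$ of $\mathcal X_{left}\Rightarrow\mathcal X_{right}$, and the right arrow is, by two-out-of-three against the fibrant approximation weak equivalence $\mathcal X_{right}(\emptyset)\to\hat F\mathcal X_{right}(\emptyset)$, a weak equivalence precisely when $\mathcal X_{right}$ is homotopy Cartesian. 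The square $C$ shares its lower cospan with the homotopy pullback square preceding the lemma, which I call $Q$, and there is an evident morphism of squares $C\to Q$ that is the identity on this cospan and is the natural map $\mathcal X(\emptyset)\to cholim_{\underset{pb}\int J}\mathcal X$ on apexes. The only point that is not purely formal is to check that this apex map is indeed the comparison map of Definition 6.1 for $\mathcal X\in\mathcal C^{(\underset{pb}\int J)_{+}}$; this is a naturality statement, proved by restricting the cone on $\mathcal X|_{\underset{pb}\int J}$ with vertex $\mathcal X(\emptyset)$ along the fiber inclusions $J\hookrightarrow\underset{pb}\int J$ and $\ast\hookrightarrow\underset{pb}\int J$ and using that the weak equivalence of Theorem 5.3 is the one induced by those inclusions. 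Since $Q$ is a homotopy pullback, it follows that $\mathcal X$ is homotopy Cartesian if and only if $C$ is a homotopy pullback.

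Granting this, the two assertions are immediate. In $C$, the left vertical is a weak equivalence exactly when $\mathcal X_{left}$ is homotopy Cartesian, and the right vertical exactly when $\mathcal X_{right}$ is. For $(i)$, if $\mathcal X_{left}$ and $\mathcal X_{right}$ are homotopy Cartesian then both verticals of $C$ are weak equivalences, so $C$ is a homotopy pullback by (a), whence $\mathcal X$ is homotopy Cartesian. For $(ii)$, if $\mathcal X$ and $\mathcal X_{right}$ are homotopy Cartesian then $C$ is a homotopy pullback whose right vertical is a weak equivalence, so its left vertical is a weak equivalence by (b), i.e.\ $\mathcal X_{left}$ is homotopy Cartesian. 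I expect the only real work to be the identification flagged above — the unwinding of $cholim_{\underset{pb}\int J}$ and of the comparison map of Theorem 5.3 needed to see that the map of apexes $C\to Q$ is the natural comparison map for $\mathcal X$; everything else is the formal manipulation of homotopy pullback squares just described.
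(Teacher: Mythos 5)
Your argument is correct and is essentially the proof the paper intends: the homotopy pullback square displayed immediately before the lemma (obtained from Theorem 5.3 applied to the split fibration $\underset{pb}\int J\rightarrow pb$) is set up precisely so that both parts follow by comparing it with your square $C$ along the apex map $\mathcal{X}(\emptyset)\rightarrow cholim_{\underset{pb}\int J}\mathcal{X}$, exactly as you do. One small caution: your auxiliary fact (b) is false as a biconditional (a homotopy pullback square can have one vertical arrow a weak equivalence without the parallel one being so), but in part $(ii)$ you only invoke the valid direction, namely that weak equivalences are preserved under homotopy base change, so the proof stands.
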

Let $\mathcal{C}$ be a pointed simplicial model category 
and $J$ a small category. We denote by $\partial$ the composite
functor $$\mathcal{C}^{(\underset{pb}\int J)_{+}}\cong
(\mathcal{C}^{ J_{+}})^{[1]}\overset{{\rm chf}}\longrightarrow 
\mathcal{C}^{ J_{+}}$$ so that 
$\partial\mathcal{X}(\emptyset)={\rm chf}(\mathcal{X}_{left}
(\emptyset)\rightarrow \mathcal{X}_{right}(\emptyset))$ and
$\partial\mathcal{X}(j)={\rm chf}(\mathcal{X}_{left}(j)
\rightarrow \mathcal{X}_{right}(j))$. It follows that
$$holim_{J}\partial\mathcal{X}\cong {\rm hf}(cholim_{J}
\mathcal{X}_{left}\rightarrow cholim_{J}\mathcal{X}_{right})$$
If $\mathcal{X},\mathcal{Y}:(\underset{pb}\int J)_{+}\rightarrow  
\mathcal{C}$ are two functors and $\mathcal{X}\rightarrow \mathcal{Y}$ 
is an objectwise weak equivalence, then 
$\partial\mathcal{X}\rightarrow \partial\mathcal{Y}$ 
is an objectwise weak equivalence.
\begin{lem} \cite[Lemma 4.7]{Ku}
Let $\mathcal{C}$ be a pointed simplicial model category and 
$J$ a small Reedy category with cofibrant constants. Let 
$\mathcal{X}:(\underset{pb}\int J)_{+}\rightarrow \mathcal{C}$ 
be a functor. If $\mathcal{X}$ is homotopy Cartesian then so is 
$\partial\mathcal{X}$. If $\mathcal{C}$ is moreover 
a stable model category, then the converse holds.
\end{lem}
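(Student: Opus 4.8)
The plan is to reduce both assertions to a single homotopy pullback square and to the behaviour of horizontal homotopy fibres under it.

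First I would fix the bookkeeping. By the isomorphism~$(3)$ of Example~1.2 we regard $\mathcal{X}$ as an arrow $g\colon\mathcal{X}_{left}\to\mathcal{X}_{right}$ of $\mathcal{C}^{J_{+}}$, so that $\partial\mathcal{X}$ is the objectwise ${\rm chf}$ of $g$ and $\mathcal{X}(\emptyset)=\mathcal{X}_{left}(\emptyset)$. Since $J$ is Reedy with cofibrant constants, the homotopy pullback square displayed just before Lemma~6.5 identifies $cholim_{\underset{pb}\int J}\mathcal{X}$ with the homotopy pullback of the cospan $\mathcal{X}_{right}(\emptyset)\to cholim_{J}\mathcal{X}_{right}\leftarrow cholim_{J}\mathcal{X}_{left}$, carrying the natural map $\mathcal{X}(\emptyset)\to cholim_{\underset{pb}\int J}\mathcal{X}$ to the canonical comparison map of $\mathcal{X}_{left}(\emptyset)$ into that homotopy pullback. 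Hence ``$\mathcal{X}$ is homotopy Cartesian'' is equivalent to the square
\[
\xymatrix{
\mathcal{X}_{left}(\emptyset) \ar[r] \ar[d] & \mathcal{X}_{right}(\emptyset) \ar[d]\\
cholim_{J}\mathcal{X}_{left} \ar[r] & cholim_{J}\mathcal{X}_{right}\\
}
\]
call it $S$, being a homotopy pullback. On the other hand $\partial\mathcal{X}(\emptyset)={\rm chf}(\mathcal{X}_{left}(\emptyset)\to\mathcal{X}_{right}(\emptyset))$ is the homotopy fibre of the top row of $S$; and, each $\partial\mathcal{X}(j)$ being a corrected homotopy limit and hence fibrant, the restriction $\partial\mathcal{X}|_{J}$ is objectwise fibrant, so $cholim_{J}\partial\mathcal{X}\simeq holim_{J}\partial\mathcal{X}\cong{\rm hf}(cholim_{J}\mathcal{X}_{left}\to cholim_{J}\mathcal{X}_{right})$ by the formula in the discussion preceding the lemma, i.e.\ the homotopy fibre of the bottom row of $S$. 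Chasing the natural maps, $\partial\mathcal{X}(\emptyset)\to cholim_{J}\partial\mathcal{X}$ becomes the map on horizontal homotopy fibres induced by $S$; thus ``$\partial\mathcal{X}$ is homotopy Cartesian'' is equivalent to that induced map being a weak equivalence.

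With this in hand the first assertion is formal: in any pointed simplicial model category, if $S$ is a homotopy pullback then, by the pasting law for homotopy pullbacks applied to $S$ and the square defining a homotopy fibre, the induced map on horizontal homotopy fibres is a weak equivalence. For the converse assume in addition that $\mathcal{C}$ is stable, and let $P$ be the homotopy pullback of $\mathcal{X}_{right}(\emptyset)\to cholim_{J}\mathcal{X}_{right}\leftarrow cholim_{J}\mathcal{X}_{left}$, so that $\mathcal{X}$ is homotopy Cartesian if and only if the comparison map $\mathcal{X}_{left}(\emptyset)\to P$ is a weak equivalence. The composite $\mathcal{X}_{left}(\emptyset)\to P\to\mathcal{X}_{right}(\emptyset)$ is the top row of $S$, and the homotopy fibre of $P\to\mathcal{X}_{right}(\emptyset)$ is the homotopy fibre of the bottom row of $S$ (pasting law again); so the homotopy fibre sequence of this composite identifies the homotopy fibre of $\mathcal{X}_{left}(\emptyset)\to P$ with the homotopy fibre of the map on horizontal homotopy fibres induced by $S$. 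If $\partial\mathcal{X}$ is homotopy Cartesian that last map is a weak equivalence, so the homotopy fibre of $\mathcal{X}_{left}(\emptyset)\to P$ is weakly equivalent to $0$; since $\mathcal{C}$ is stable its homotopy cofibre, being the suspension of this homotopy fibre, is then weakly equivalent to $0$ as well, whence $\mathcal{X}_{left}(\emptyset)\to P$ is a weak equivalence and $\mathcal{X}$ is homotopy Cartesian.

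The delicate part is entirely the bookkeeping of the second paragraph, namely checking that the several natural maps are genuinely compatible along the chain of identifications — isomorphism~$(3)$ of Example~1.2, the square before Lemma~6.5, the ${\rm chf}$ versus ${\rm hf}$ comparison, and the formula for $holim_{J}\partial\mathcal{X}$ — and handling the fibrant approximations carefully enough that ${\rm chf}$ may be replaced by an honest homotopy fibre throughout. Everything afterwards is formal; phrased through total homotopy fibres, the content is simply that a homotopy pullback square always induces a weak equivalence on horizontal homotopy fibres, and that in the stable case the converse holds because there a square is homotopy Cartesian precisely when its total homotopy fibre vanishes.
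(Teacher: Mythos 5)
Your proof is correct and is exactly the argument the paper intends: it gives no proof of its own (deferring to Kuhn's Lemma 4.7), but the homotopy pullback square displayed before Lemma 6.4 and the identification $holim_{J}\partial\mathcal{X}\cong {\rm hf}(cholim_{J}\mathcal{X}_{left}\rightarrow cholim_{J}\mathcal{X}_{right})$ stated just before the lemma are precisely the two reductions you perform, after which the pasting law for homotopy pullbacks and the fibre-of-fibres sequence (plus, in the stable case, the fact that a map with weakly trivial homotopy fibre is a weak equivalence) finish the proof. The only caveat is the one you already flag, namely the compatibility of the various natural maps under the identification $(\underset{pb}\int J)_{+}\cong J_{+}\times[1]$ and the replacement of ${\rm chf}$ by ${\rm hf}$ on objectwise fibrant diagrams, and this is routine.
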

\begin{lem}
Let $J$ be a small category and 
$F:\mathcal{C}\rightarrow \mathcal{D}$ a
functor between pointed simplicial model 
categories such that 

$(i)$ $F0\rightarrow 0$ is a weak equivalence, and

$(ii)$ $F$ sends homotopy pullback 
diagrams of the form 
\[
   \xymatrix{
A \ar[r] \ar[d] & 0 \ar[d]\\
B \ar[r] & C\\
}
  \]
to homotopy pullbacks.

Let $\mathcal{X}:(\underset{pb}\int J)_{+}\rightarrow 
\mathcal{C}$ be a functor. Then there are a functor 
$H: J_{+}\rightarrow \mathcal{D}$
and a diagram of functors 
$F\partial \mathcal{X}\rightarrow H \leftarrow 
\partial F\mathcal{X}$ in which every map is an 
objectwise weak equivalence.
\end{lem}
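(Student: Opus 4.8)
The plan is to unwind the definitions of $\partial$ and then produce $H$ as a corrected homotopy fiber taken in $\mathcal{D}$, interpolating between $F\partial\mathcal{X}$ and $\partial F\mathcal{X}$. Recall that $\partial\mathcal{X}(j)=\mathrm{chf}(\mathcal{X}_{left}(j)\to\mathcal{X}_{right}(j))$, the corrected homotopy limit of $\mathcal{X}_{left}(j)\to\mathcal{X}_{right}(j)\leftarrow 0$, so $F\partial\mathcal{X}(j)=F\,\mathrm{chf}(\mathcal{X}_{left}(j)\to\mathcal{X}_{right}(j))$, while $\partial F\mathcal{X}(j)=\mathrm{chf}(F\mathcal{X}_{left}(j)\to F\mathcal{X}_{right}(j))$. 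Define $H(j)=F\mathcal{X}_{left}(j)\,\overset{ch}{\times}_{F\mathcal{X}_{right}(j)}F0$, the corrected homotopy limit of $F\mathcal{X}_{left}(j)\to F\mathcal{X}_{right}(j)\leftarrow F0$. This is functorial in $j\in J_+$, so $H:J_{+}\to\mathcal{D}$ is the required functor. The two comparison maps $F\partial\mathcal{X}(j)\to H(j)\leftarrow\partial F\mathcal{X}(j)$ are exactly the maps $F\,\mathrm{chf}(g)\to FX\overset{ch}{\times}_{FY}F0\leftarrow\mathrm{chf}(Fg)$ recalled in the discussion preceding Lemma 5.1, applied objectwise to $g=\big(\mathcal{X}_{left}(j)\to\mathcal{X}_{right}(j)\big)$.

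First I would check naturality: since $\mathrm{chf}$, $F$, and the corrected homotopy pullback are all functors, and the comparison maps from the paragraph before Lemma 5.1 are natural in the arrow $g$, each of the three assignments $j\mapsto F\partial\mathcal{X}(j)$, $j\mapsto H(j)$, $j\mapsto\partial F\mathcal{X}(j)$ is functorial on $J_{+}$ and the two maps assemble into natural transformations $F\partial\mathcal{X}\to H\leftarrow\partial F\mathcal{X}$ of functors $J_{+}\to\mathcal{D}$. Next I would verify that both maps are objectwise weak equivalences. This is precisely the assertion, recalled just before Lemma 5.1, that the maps $F\,\mathrm{chf}(g)\to FX\overset{ch}{\times}_{FY}F0\leftarrow\mathrm{chf}(Fg)$ are weak equivalences whenever $F0\to 0$ is a weak equivalence and $F$ carries homotopy pullback squares of the form displayed in hypothesis $(ii)$ to homotopy pullbacks — which are exactly our standing hypotheses $(i)$ and $(ii)$. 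Applying this at each object $j\in J_{+}$ with $g=\big(\mathcal{X}_{left}(j)\to\mathcal{X}_{right}(j)\big)$ gives the objectwise weak equivalences.

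The one point requiring a little care is that $F$ must be applied to a genuine homotopy pullback square; the square $A\to 0$, $A\to B$, $B\to C$ that enters is the defining square of $\mathrm{chf}(g)$, i.e. the homotopy pullback of $\mathcal{X}_{left}(j)\to\mathcal{X}_{right}(j)\leftarrow 0$, and this is indeed a homotopy pullback after a fibrant replacement — which is what $\mathrm{chf}$ builds in. So the hypothesis $(ii)$ applies on the nose once we note that, by $(i)$, $F0\to 0$ is a weak equivalence and hence $FA\to FB\times^h_{FC}F0\simeq FB\times^h_{FC}0$ unwinds to the needed comparison. The main (modest) obstacle is simply bookkeeping: making sure the corrected constructions are used consistently so that the comparison maps of the pre-Lemma-5.1 paragraph are literally available, rather than needing to be re-derived; once that is arranged the proof is a direct objectwise invocation of that discussion.

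\begin{proof}
Recall that for $j\in J_{+}$ one has $\partial\mathcal{X}(j)=\mathrm{chf}\big(\mathcal{X}_{left}(j)\to\mathcal{X}_{right}(j)\big)$, the corrected homotopy limit of $\mathcal{X}_{left}(j)\to\mathcal{X}_{right}(j)\leftarrow 0$. Define $H:J_{+}\to\mathcal{D}$ by letting $H(j)$ be the corrected homotopy limit of the diagram $F\mathcal{X}_{left}(j)\to F\mathcal{X}_{right}(j)\leftarrow F0$; this is functorial in $j$ since corrected homotopy limits and $F$ are functors. Applying the comparison maps recalled in the discussion preceding Lemma 5.1, objectwise at each $j\in J_{+}$ to the arrow $g_{j}=\big(\mathcal{X}_{left}(j)\to\mathcal{X}_{right}(j)\big)$, we obtain maps
$$F\partial\mathcal{X}(j)=F\,\mathrm{chf}(g_{j})\longrightarrow H(j)\longleftarrow \mathrm{chf}(Fg_{j})=\partial F\mathcal{X}(j),$$
and these are natural in $j$ because the said comparison maps are natural in the arrow $g_{j}$; hence they assemble into natural transformations $F\partial\mathcal{X}\to H\leftarrow\partial F\mathcal{X}$ of functors $J_{+}\to\mathcal{D}$.

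It remains to see that both maps are objectwise weak equivalences. This is exactly the statement, recalled before Lemma 5.1, that the two maps $F\,\mathrm{chf}(g)\to FX\overset{ch}{\times}_{FY}F0\leftarrow\mathrm{chf}(Fg)$ are weak equivalences provided $F0\to 0$ is a weak equivalence and $F$ sends homotopy pullback diagrams of the form displayed in $(ii)$ to homotopy pullbacks. These are precisely our hypotheses $(i)$ and $(ii)$, and the defining square of $\mathrm{chf}(g_{j})$ is of that form. Evaluating at each object $j\in J_{+}$ yields the desired objectwise weak equivalences.
\end{proof}
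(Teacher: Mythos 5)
Your proof is correct and follows exactly the route the paper intends: the paper's one-line proof ("the considerations in 2.1", evidently the paragraph on the maps $F\,\mathrm{chf}(g)\to FX\overset{ch}{\times}_{FY}F0\leftarrow \mathrm{chf}(Fg)$ preceding Lemma 5.1) is precisely your objectwise application of those comparison maps with $H(j)=F\mathcal{X}_{left}(j)\overset{ch}{\times}_{F\mathcal{X}_{right}(j)}F0$. You have simply spelled out the naturality and the objectwise invocation that the paper leaves implicit.
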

\begin{proof}
This follows from the considerations in 2.1.
\end{proof}
\begin{corollary} \cite[Proof of Lemma 1.19]{Ch}
Let $J$ be a small Reedy category with cofibrant 
constants and $\mathcal{C}$ and $\mathcal{D}$ pointed 
simplicial model categories with $\mathcal{D}$ stable.
Let $F:\mathcal{C}\rightarrow \mathcal{D}$ be
a functor such that 

$(i)$ $F0\rightarrow 0$ is a weak equivalence, and

$(ii)$ $F$ sends homotopy pullback 
diagrams of the form 
\[
   \xymatrix{
A \ar[r] \ar[d] & 0 \ar[d]\\
B \ar[r] & C\\
}
  \]
to homotopy pullbacks.

If $F$ preserves homotopy Cartesian objects 
$J_{+} \rightarrow \mathcal{C}$, then $F$ 
preserves homotopy Cartesian objects
$(\underset{pb}\int J)_{+} \rightarrow \mathcal{C}$.
\end{corollary}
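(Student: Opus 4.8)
The plan is to reduce the statement about cubes over $(\underset{pb}\int J)_{+}$ to the already-established behaviour over $J_{+}$, by exploiting the two decompositions of a $(\underset{pb}\int J)_{+}$-diagram: as an arrow $\mathcal{X}_{left}\to\mathcal{X}_{right}$ of $\mathcal{C}^{J_{+}}$ (via isomorphism (3) of Example 1.2), and, after applying $\partial$, as a single $J_{+}$-diagram. So let $\mathcal{X}:(\underset{pb}\int J)_{+}\to\mathcal{C}$ be homotopy Cartesian; I want to show $F\mathcal{X}$ is homotopy Cartesian.

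First I would record what ``$\mathcal{X}$ homotopy Cartesian'' buys us via the homotopy pullback square displayed just before Lemma~6.4: the square with corners $cholim_{\underset{pb}\int J}\mathcal{X}$, $\hat F\mathcal{X}_{right}(\emptyset)$, $cholim_{J}\mathcal{X}_{left}$, $cholim_{J}\mathcal{X}_{right}$ is homotopy Cartesian, and since $\mathcal{X}$ is homotopy Cartesian its initial vertex $\mathcal{X}(\emptyset)$ (which is $\mathcal{X}_{left}(\emptyset)$) maps by a weak equivalence to $cholim_{\underset{pb}\int J}\mathcal{X}$. Combining, the square
\[
\xymatrix{
\mathcal{X}_{left}(\emptyset) \ar[r] \ar[d] & \mathcal{X}_{right}(\emptyset) \ar[d]\\
cholim_{J}\mathcal{X}_{left} \ar[r] & cholim_{J}\mathcal{X}_{right}\\
}
\]
is homotopy Cartesian. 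Equivalently, passing to corrected homotopy fibers of the two horizontal arrows (using the identity $holim_{J}\partial\mathcal{X}\cong{\rm hf}(cholim_{J}\mathcal{X}_{left}\to cholim_{J}\mathcal{X}_{right})$ from the paragraph before Lemma~6.5, and the compatible statement for $\emptyset$), this says precisely that $\partial\mathcal{X}$ is homotopy Cartesian — which is of course exactly the content of Lemma~6.5 under the Reedy hypothesis, so I can simply invoke Lemma~6.5 to get that $\partial\mathcal{X}$ is a homotopy Cartesian object of $\mathcal{C}^{J_{+}}$.

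Next I would apply the hypotheses on $F$. Since $F$ preserves homotopy Cartesian objects $J_{+}\to\mathcal{C}$, $F(\partial\mathcal{X})$ is homotopy Cartesian. By Lemma~6.6 (with the hypotheses $(i)$, $(ii)$ on $F$, which are exactly those assumed here), there is a $J_{+}$-diagram $H$ and a zig-zag $F\partial\mathcal{X}\to H\leftarrow\partial F\mathcal{X}$ of objectwise weak equivalences; by El~6.2(3) it follows that $\partial(F\mathcal{X})$ is homotopy Cartesian. Now I use the \emph{converse} direction of Lemma~6.5: because $\mathcal{D}$ is stable and $F\mathcal{X}:(\underset{pb}\int J)_{+}\to\mathcal{D}$ has $\partial(F\mathcal{X})$ homotopy Cartesian, $F\mathcal{X}$ itself is homotopy Cartesian, which is what we wanted.

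I do not expect a genuine obstacle: every ingredient is already set up. The one point requiring a little care is bookkeeping with the two ways a $(\underset{pb}\int J)_{+}$-diagram can be sliced — checking that ``$\partial$ of $F\mathcal{X}$'' and ``$F$ applied to $\partial\mathcal{X}$'' are being compared correctly through Lemma~6.6, and that the zig-zag there really lands us in a position to apply El~6.2(3) and then Lemma~6.5. A secondary subtlety is that Lemma~6.6 produces $H:J_{+}\to\mathcal{D}$ abstractly, so one must make sure the maps in the zig-zag are natural enough to conclude that the \emph{whole} $J_{+}$-diagram $\partial(F\mathcal{X})$ — not just its value at $\emptyset$ — is weakly equivalent to $F(\partial\mathcal{X})$; this is guaranteed by ``objectwise weak equivalence'' in the statement of Lemma~6.6, so El~6.2(3) applies verbatim. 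Stability of $\mathcal{D}$ is used exactly once, and only for the converse half of Lemma~6.5; the Reedy-with-cofibrant-constants hypothesis on $J$ is what makes Lemmas~6.4 and~6.5 available.
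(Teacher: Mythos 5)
Your argument is correct and is exactly the paper's proof: the paper's entire justification is ``One uses Lemmas 6.5 and 6.6, and 6.2(3)'', which is precisely your chain ($\mathcal{X}$ Cartesian $\Rightarrow \partial\mathcal{X}$ Cartesian by Lemma 6.5, $\Rightarrow F\partial\mathcal{X}$ Cartesian by hypothesis, $\Rightarrow \partial F\mathcal{X}$ Cartesian by Lemma 6.6 and 6.2(3), $\Rightarrow F\mathcal{X}$ Cartesian by the stable converse of Lemma 6.5). Your opening re-derivation of the first half of Lemma 6.5 from the homotopy pullback square is harmless but unnecessary.
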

\begin{proof}
One uses Lemmas 6.5 and 6.6, and 6.2(3). 
\end{proof}
\begin{lem}
Let $\mathcal{C}$ be a locally finitely presentable simplicial 
model category whose tensor, viewed as a functor
${\bf S}\times \mathcal{C}\rightarrow \mathcal{C}$, 
preserves finitely presentable objects, and such that 
an $\omega$-indexed colimit of weak equivalences of 
$\mathcal{C}$ is a weak equivalence and an 
$\omega$-indexed colimit of fibrant objects of $\mathcal{C}$ 
is fibrant. Let $J$ be a finite category such that for each 
object $j$ of $J$, the nerve of $(J\downarrow j)$ is finitely 
presentable. Then a sequential corrected homotopy 
colimit of homotopy Cartesian objects of 
$\mathcal{C}^{J_{+}}$ is homotopy Cartesian.
\end{lem}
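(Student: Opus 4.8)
The plan is to compare the sequential corrected homotopy colimit with the two operations that enter the definition of ``homotopy Cartesian'' -- evaluation at $\emptyset$ and $cholim_{J}$ -- using that each of these commutes (up to weak equivalence) with a sequential $chocolim$: evaluation because corrected homotopy colimits in a functor category are computed objectwise, and $cholim_{J}$ by Corollary 5.5. Write $\mathcal{X}\colon\omega\to\mathcal{C}^{J_{+}}$ for the given sequence, with each $\mathcal{X}_{n}$ homotopy Cartesian, set $\mathcal{W}=chocolim_{\omega}\mathcal{X}\in\mathcal{C}^{J_{+}}$, and let $\mathcal{X}'\colon\omega\to\mathcal{C}^{J}$ be obtained by restricting each $\mathcal{X}_{n}$ along the inclusion $J\hookrightarrow J_{+}$.

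First I would record the identifications $(chocolim_{\omega}\mathcal{X})(\emptyset)\cong chocolim_{\omega}\bigl(\mathcal{X}_{\bullet}(\emptyset)\bigr)$ and $\mathcal{W}|_{J}\cong chocolim_{\omega}\mathcal{X}'$: both restriction along $J\hookrightarrow J_{+}$ and evaluation at $\emptyset\in J_{+}$ are simplicial left adjoints, hence commute with the Bousfield--Kan $hocolim_{\omega}$, and they carry an objectwise cofibrant approximation on $\mathcal{C}^{J_{+}}$ to one on $\mathcal{C}^{J}$, respectively $\mathcal{C}$. Granting this, I would consider the square
\[
\xymatrix{
chocolim_{\omega}\bigl(\mathcal{X}_{\bullet}(\emptyset)\bigr) \ar[r] \ar[d]_{\cong} & chocolim_{\omega}\bigl(cholim_{J}\mathcal{X}'_{\bullet}\bigr) \ar[d]\\
\mathcal{W}(\emptyset) \ar[r] & cholim_{J}\bigl(\mathcal{W}|_{J}\bigr)\\
}
\]
in which the top arrow is $chocolim_{\omega}$ applied to the levelwise map $\mathcal{X}_{n}(\emptyset)\to cholim_{J}\mathcal{X}'_{n}$, the bottom arrow is the natural map of Definition 6.1 for $\mathcal{W}$, the left arrow is the identification above, and the right arrow is the natural comparison of Corollary 5.5, applied with the filtered category taken to be $\omega$. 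Every edge is assembled from the canonical map out of the initial object of $J_{+}$ together with the correction functors, so the square commutes up to homotopy.

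Next I would check that three of the four edges are weak equivalences. The top edge is a weak equivalence because each $\mathcal{X}_{n}$ is homotopy Cartesian, so $\mathcal{X}_{n}(\emptyset)\to cholim_{J}\mathcal{X}'_{n}$ is a weak equivalence for every $n$, and $chocolim_{\omega}$ preserves objectwise weak equivalences (cf. \cite[19.6.8(1)]{Hi}, as already used in the proof of Lemma 5.4). The left edge is an isomorphism. The right edge is a weak equivalence by Corollary 5.5: its source and target are $chocolim_{\omega}cholim_{J}\mathcal{X}'$ and $cholim_{J}chocolim_{\omega}\mathcal{X}'$, and the hypotheses on $\mathcal{C}$ and $J$ imposed there are precisely those of the present statement (with $I=\omega$). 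Two-out-of-three then forces the bottom edge to be a weak equivalence, which is exactly the assertion that $\mathcal{W}=chocolim_{\omega}\mathcal{X}$ is homotopy Cartesian.

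The step I expect to cost the most effort is justifying that the square commutes with the indicated edges, and in particular that the comparison supplied by Corollary 5.5 is compatible with evaluation and restriction, since Corollary 5.5 as stated only provides a natural zig-zag of weak equivalences rather than a single map. This is a bookkeeping matter: $hocolim_{\omega}$, $holim_{J}$, evaluation at $\emptyset$, restriction along $J\hookrightarrow J_{+}$ and the (co)fibrant approximation functors are all natural, and every map in sight is an instance either of the canonical $hocolim_{\omega}\,holim_{J}\to holim_{J}\,hocolim_{\omega}$ or of a canonical (co)fibrant approximation, so the diagram commutes in the homotopy category, which is all the two-out-of-three argument needs. Alternatively one may first replace $\mathcal{X}$, using $6.2(3)$, by a levelwise fibrant and objectwise Reedy cofibrant diagram, after which the correction functors can be dispensed with and the square commutes on the nose.
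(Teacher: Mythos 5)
Your argument is correct and is exactly the paper's approach: the paper's entire proof is ``This is a consequence of Corollary 5.5,'' and your square with the two-out-of-three argument is precisely the elaboration of that one-line proof (with the added care about evaluation at $\emptyset$ and the zig-zag in Corollary 5.5 that the paper leaves implicit).
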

\begin{proof}
This is a consequence of Corollary 5.5.
\end{proof}

\section{On (strongly) homotopy co-Cartesian cubes}
In this section we give a model theoretic interpretation 
of the notions of homotopy co-Cartesian and strongly homotopy 
co-Cartesian cube from \cite[Definitions 1.4 and 2.1]{Go2}.
\\

Let $\mathcal{C}$ be a fixed model category. Let $J$ be
a small Reedy category. Following \cite[Chapter 15]{Hi},
we denote by $F^{n}J$ the $n$-filtration of $J$ and by
$I^{n}:F^{n-1}J\rightarrow F^{n}J$ the inclusion functor.
An object of $\mathcal{C}^{J}$ is cofibrant if and 
only if for every $n\geq 0$, its restriction to $F^{n}J$
is cofibrant. For each object $\alpha$ of $J$ of degree $n$, 
$(I^{n}\downarrow \alpha)$ is a Reedy category.
The restriction functor $I^{n \ast}:\mathcal{C}^{F^{n}J}
\rightarrow\mathcal{C}^{F^{n-1}J}$ has a left adjoint 
$I^{n}_{!}$ and a right adjoint $I^{n}_{\ast}$. 
\begin{lem}
Suppose that $J$ has fibrant constants. Then both
$F^{n}J$ and $(I^{n}\downarrow \alpha)$ have fibrant 
constants, where $\alpha$ is an object of $J$ of degree 
$n$, and $(I^{n}_{!},I^{n \ast})$ is a Quillen pair.
\end{lem}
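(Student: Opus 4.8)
The plan is to reduce everything to the analogous statement for a single Reedy category, together with the way (co)fibrancy and the filtration interact. Recall that $F^nJ$ is itself a Reedy category, with inverse subcategory $\overleftarrow{F^nJ}=\overleftarrow{J}\cap F^nJ$ and direct subcategory $\overrightarrow{F^nJ}=\overrightarrow{J}\cap F^nJ$, and that ``$J$ has fibrant constants'' means the constant-diagram functor $c:\mathcal{C}\to\mathcal{C}^{\overleftarrow{J}}$ is left Quillen for the Reedy structure on $\overleftarrow{J}$ (equivalently, $\operatorname{colim}_{\overleftarrow{J}^{op}}$ is right Quillen, or in yet other words, the latching objects of the constant diagram behave well). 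First I would check that the inverse subcategory of $F^nJ$ is a full subcategory of $\overleftarrow{J}$ closed under the relevant matching constructions, so that a constant diagram on $F^nJ$, restricted to $\overleftarrow{F^nJ}$, has exactly the same latching maps as the constant diagram on $J$ restricted to $\overleftarrow{J}$ had at those objects; this is immediate from the definition of the filtration, since an object of $F^nJ$ has all its latching data (which lives in lower degree) already inside $F^nJ$. Hence fibrant constants for $J$ forces fibrant constants for $F^nJ$.

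Next I would treat the comma categories $(I^n\downarrow\alpha)$ for $\alpha$ of degree $n$. The key point is the standard description (as in Hirschhorn, Chapter 15): an object of $(I^n\downarrow\alpha)$ is a map $\beta\to\alpha$ with $\beta\in F^{n-1}J$, and this comma category carries a Reedy structure in which the direct and inverse subcategories are induced from those of $J$ on the source objects $\beta$. Since every such $\beta$ has degree $\le n-1<n=\deg\alpha$, all the relevant morphisms are genuinely morphisms of $\overleftarrow{J}$ or $\overrightarrow{J}$, and the matching/latching objects computed in $(I^n\downarrow\alpha)$ agree with those computed in $J$. Consequently the constant-diagram functor on $(I^n\downarrow\alpha)$ inherits left-Quillenness from that on $J$; this gives fibrant constants for $(I^n\downarrow\alpha)$.

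Finally, for the Quillen pair assertion: the adjunction $(I^n_!,I^{n\,*})$ is always an adjunction, and $I^{n\,*}$ (restriction along the inclusion $F^{n-1}J\hookrightarrow F^nJ$) manifestly preserves fibrations and trivial fibrations, since these are detected objectwise together with matching maps, and the matching objects over $F^{n-1}J$ are computed the same way whether one sits in $\mathcal{C}^{F^{n-1}J}$ or inside $\mathcal{C}^{F^nJ}$ (again because all matching data lives in degree $\le n-1$). Therefore $I^{n\,*}$ is right Quillen and $(I^n_!,I^{n\,*})$ is a Quillen pair. Here ``$J$ has fibrant constants'' is used, if at all, only to have a well-behaved Reedy structure on $F^nJ$ in the first place; strictly the Quillen-pair statement holds for any Reedy category, so I would phrase that half without invoking the fibrant-constants hypothesis, noting the hypothesis is what the subsequent results (e.g.\ the dual of Theorem 5.3) will actually consume.

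The main obstacle I anticipate is purely bookkeeping: making precise the claim that latching and matching objects ``do not change'' when one passes between $J$, $F^nJ$, and $(I^n\downarrow\alpha)$. The cleanest way to handle it is to invoke the cofinality/filtration lemmas of Hirschhorn Chapter 15 directly—the explicit identification of $(I^n\downarrow\alpha)$ and of $I^n_!$ in terms of latching objects is already in the literature—so that the proof becomes a one-line citation plus the observation that fibrant constants is inherited by full Reedy subcategories whose objects carry all their own matching data. I do not expect any genuine difficulty beyond organizing these references.
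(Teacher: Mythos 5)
The paper states this lemma without proof, so there is no argument of the author's to compare yours against; I will judge the proposal on its own terms. Two of your three claims are in order. For $F^{n}J$: since a non-identity map of $\overleftarrow{J}$ strictly lowers degree, one has $\partial(\alpha\downarrow\overleftarrow{F^{n}J})=\partial(\alpha\downarrow\overleftarrow{J})$ for every object $\alpha$ of $F^{n}J$, so Hirschhorn's criterion (each such category empty or connected) is literally inherited --- though note that for \emph{fibrant} constants the relevant data are the \emph{matching} categories $\partial(\alpha\downarrow\overleftarrow{J})$, not the latching data as you write, and the constant-diagram functor in question is $c:\mathcal{C}\rightarrow\mathcal{C}^{J}$ with the Reedy structure, not $\mathcal{C}^{\overleftarrow{J}}$. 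For the Quillen pair: correct, $I^{n\ast}$ preserves Reedy fibrations and trivial fibrations because the matching categories of objects of degree $\leq n-1$ already lie in $F^{n-1}J$, and you are right that this half needs no fibrant-constants hypothesis.

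The gap is in the $(I^{n}\downarrow\alpha)$ part. An object of $\partial\bigl((\beta,h)\downarrow\overleftarrow{(I^{n}\downarrow\alpha)}\bigr)$ is a factorization $h=h'\circ s$ with $s:\beta\rightarrow\beta'$ a non-identity map of $\overleftarrow{J}$, together with the chosen $h':\beta'\rightarrow\alpha$. This is a category of factorizations of $h$, not the under-category $\partial(\beta\downarrow\overleftarrow{J})$: it omits those $s$ through which $h$ does not factor, and a single $s$ can occur with several different $h'$. So your assertion that the matching objects computed in $(I^{n}\downarrow\alpha)$ agree with those computed in $J$ is false, and connectivity is not inherited for the reason you give. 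The claim is nevertheless true, by a different argument: write $h=\vec{h}\circ\overleftarrow{h}$ using the unique Reedy factorization; given any object $(s,h')$, factoring $h'$ and invoking uniqueness produces a map $(s,h')\rightarrow(\overleftarrow{h},\vec{h})$ in the category above, so that category is either empty (exactly when $h$ lies in $\overrightarrow{J}$) or has a weakly terminal object and hence is connected. In particular $(I^{n}\downarrow\alpha)$ has fibrant constants for \emph{any} small Reedy category $J$; the hypothesis on $J$ is genuinely used only for $F^{n}J$.
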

The functor $I^{n \ast}$ is a cloven Grothendieck 
bifibration. The fiber category of $I^{n \ast}$ over 
$\mathcal{X}\in \mathcal{C}^{F^{n-1}J}$ is denoted by
$(\mathcal{C}^{F^{n}J})_{\mathcal{X}}$.
A cartesian lift of $u:\mathcal{Y}\rightarrow 
I^{n \ast}\mathcal{Z}$ is given by the pullback diagram
\[
\xymatrix{
u^{\ast}\mathcal{Z} \ar[r] \ar[d] & \mathcal{Z} \ar[d]\\
I^{n}_{\ast}\mathcal{Y} \ar[r] & I^{n}_{\ast}I^{n \ast}
\mathcal{Z}\\
}
  \]
A cocartesian lift of $u:I^{n \ast}\mathcal{Z}\rightarrow 
\mathcal{Y}$ is given by the pushout diagram
\[
\xymatrix{
I^{n}_{!}I^{n \ast}\mathcal{Z}\ar[r] \ar[d] & I^{n}_{!}
\mathcal{Y} \ar[d]\\
\mathcal{Z} \ar[r] &  u_{!}\mathcal{Z}\\
}
\]
Every map $f:\mathcal{X}\rightarrow \mathcal{Y}$ of
$\mathcal{C}^{F^{n}J}$ can be decomposed as
$\mathcal{X}\overset{f^{u}}\longrightarrow 
u^{*}(\mathcal{Y})\overset{cart}\longrightarrow 
\mathcal{Y}$, where $u=p(f)$ and $cart:u^{*}(\mathcal{Y})
\rightarrow \mathcal{Y}$ is cartesian over $u$, and as
$\mathcal{X}\overset{cocart}\longrightarrow 
u_{!}(\mathcal{X})\overset{f_{u}}\rightarrow 
\mathcal{Y}$, where $cocart:\mathcal{X}\rightarrow 
u_{!}\mathcal{X}$ is cocartesian over $u$.
\begin{lem}
For every object $\mathcal{X}$ of $\mathcal{C}^{F^{n-1}J}$, 
$(\mathcal{C}^{F^{n}J})_{\mathcal{X}}$ has a 
model structure in which a map $f$ is a weak equivalence, 
cofibration or fibration if $f$ is so in $\mathcal{C}^{F^{n}J}$.
\end{lem}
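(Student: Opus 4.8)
The plan is to identify $(\mathcal{C}^{F^{n}J})_{\mathcal{X}}$ with a small product of ``categories of factorizations'' in $\mathcal{C}$, each of which carries an evident model structure, and then to invoke the fact that a small product of model categories is a model category with the componentwise structure.

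First I would record the standard description of the extension problem for the Reedy filtration (compare \cite[Chapter 15]{Hi}). The objects of $F^{n}J$ not lying in $F^{n-1}J$ are exactly those of degree $n$, and every non-identity morphism of $F^{n}J$ between two objects of degree $n$ factors uniquely, through its Reedy factorization, through an object of degree $<n$. Hence giving an object $\mathcal{Y}$ of $(\mathcal{C}^{F^{n}J})_{\mathcal{X}}$ amounts to giving, for each object $\alpha$ of $J$ of degree $n$, a factorization $$L_{\alpha}\mathcal{X}\longrightarrow \mathcal{Y}(\alpha)\longrightarrow M_{\alpha}\mathcal{X}$$ of the canonical map from the latching object to the matching object of $\alpha$ (both of which depend only on $\mathcal{X}$, being computed from its values at objects of degree $<n$); and likewise for morphisms that restrict to the identity on $F^{n-1}J$. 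Writing, for a map $g\colon L\to M$ of $\mathcal{C}$, $\mathrm{Fact}_{\mathcal{C}}(g)$ for the category whose objects are factorizations $L\to Z\to M$ of $g$ and whose morphisms $Z\to Z'$ are maps of $\mathcal{C}$ commuting with the structure maps, one thus obtains an isomorphism of categories $$(\mathcal{C}^{F^{n}J})_{\mathcal{X}}\;\cong\;\prod_{\deg\alpha=n}\mathrm{Fact}_{\mathcal{C}}\bigl(L_{\alpha}\mathcal{X}\to M_{\alpha}\mathcal{X}\bigr),$$ under which the functor sending $\mathcal{Y}$ to $\mathcal{Y}(\alpha)$ corresponds to $(L\to Z\to M)\mapsto Z$ on the factor indexed by $\alpha$.

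Next I would check that for any map $g\colon L\to M$ of a model category $\mathcal{C}$, the category $\mathrm{Fact}_{\mathcal{C}}(g)$ has a model structure in which $Z\to Z'$ is a weak equivalence, cofibration or fibration exactly when it is so in $\mathcal{C}$. Limits in $\mathrm{Fact}_{\mathcal{C}}(g)$ are computed in the slice $(\mathcal{C}\downarrow M)$ and colimits in the coslice $(L\downarrow \mathcal{C})$ (in each case the remaining structure map is the induced one), so $\mathrm{Fact}_{\mathcal{C}}(g)$ is bicomplete; two-out-of-three and the retract axiom are immediate. For the lifting and factorization axioms one takes lifts and (functorial) factorizations in $\mathcal{C}$; the only thing to verify is that the maps and objects produced form again a morphism, resp.\ an object, of $\mathrm{Fact}_{\mathcal{C}}(g)$, i.e.\ that the relevant triangles with $L$ and $M$ commute --- a short diagram chase using that the data already at hand consists of morphisms of $\mathrm{Fact}_{\mathcal{C}}(g)$.

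Finally, a small product of model categories is a model category with componentwise weak equivalences, cofibrations and fibrations, and transporting this structure along the isomorphism of the first step produces the desired model structure on $(\mathcal{C}^{F^{n}J})_{\mathcal{X}}$. That its three classes are precisely the ones detected in the Reedy category $\mathcal{C}^{F^{n}J}$ follows because Reedy weak equivalences are objectwise, and for a morphism $f$ of the fiber the relative latching (resp.\ matching) map at an object $\alpha$ of degree $n$ is just $f(\alpha)$, while at objects of degree $<n$ it is an identity; thus $f$ is a Reedy cofibration, fibration or weak equivalence iff each $f(\alpha)$ with $\deg\alpha=n$ is so in $\mathcal{C}$. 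The step requiring the most care is the first one --- correctly matching the factorization data at the degree-$n$ objects with the latching and matching objects of $F^{n}J$, and confirming that morphisms among degree-$n$ objects impose no further data or relations; once that bookkeeping is in place, the rest is formal.
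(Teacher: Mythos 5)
Your argument is correct, but it reaches the conclusion by a genuinely different route than the paper. The paper's proof is a one-line adaptation of the inductive construction of Reedy factorizations in Hirschhorn [Hi, 15.3.16]: one builds the factorization of a map degree by degree, and since every object and every morphism of the fibre is fixed on $F^{n-1}J$, one simply takes identity factorizations in degrees $\leq n-1$; at degree $n$ the usual latching/matching construction then produces the required factorization inside the fibre. You instead identify the fibre $(\mathcal{C}^{F^{n}J})_{\mathcal{X}}$ with the product, over the objects $\alpha$ of degree $n$, of the factorization categories $\mathrm{Fact}_{\mathcal{C}}(L_{\alpha}\mathcal{X}\to M_{\alpha}\mathcal{X})\cong \bigl((L_{\alpha}\mathcal{X}\downarrow \mathcal{C})\downarrow (L_{\alpha}\mathcal{X}\to M_{\alpha}\mathcal{X})\bigr)$ and transport the product of the slice/coslice model structures. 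The bookkeeping you flag in your first step is exactly the standard description of Reedy extensions (non-identity maps between two degree-$n$ objects factor through strictly lower degree, so they impose no extra data or relations), and your observation that for a fibre morphism $f$ the relative latching and matching maps reduce to $f(\alpha)$ at degree $n$ and to identities below is what guarantees that the transported classes coincide with those inherited from the Reedy structure on $\mathcal{C}^{F^{n}J}$, as the lemma asserts. What your packaging buys is that bicompleteness of the fibre and all the axioms come for free from well-known facts about slice, coslice and product model categories, and it exhibits the fibre model structure as something completely explicit; what the paper's version buys is brevity, since the inductive mechanism is already in Hirschhorn and only the choice of identity factorizations in low degrees needs to be pointed out. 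At bottom the degree-$n$ factorization step is the same computation in both arguments.
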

\begin{proof}
The factorization axiom of a model category is proved
inductively on the degree of the objects of $\mathcal{C}$, 
exactly as in \cite[15.3.16]{Hi}. The difference with 
\emph{loc. cit.} is in degrees $\leq n-1$, when we choose 
the factorization to be given by identity maps.
\end{proof}
\begin{theorem}
Let $J$ be a small Reedy category with fibrant constants.
For each $n\geq 1$ the category $\mathcal{C}^{F^{n}J}$
admits a model structure in which a map  
$f:\mathcal{X}\rightarrow \mathcal{Y}$ is a 

$\bullet$ weak equivalence if $I^{n \ast}(f)$ is a weak 
equivalence in $\mathcal{C}^{F^{n-1}J}$;

$\bullet$ cofibration if $f_{u}$ is a trivial cofibration in 
$(\mathcal{C}^{F^{n}J})_{I^{n \ast}\mathcal{Y}}$
 and $I^{n \ast}(f)$ is a cofibration in
$\mathcal{C}^{F^{n-1}J}$;

$\bullet$ fibration if $f^{u}$ is a fibration in 
$(\mathcal{C}^{F^{n}J})_{I^{n \ast}\mathcal{X}}$ and 
$I^{n \ast}(f)$ is a fibration in $\mathcal{C}^{F^{n-1}J}$.

We denote this model structure by $L_{r}\mathcal{C}^{F^{n}J}$.
$L_{r}\mathcal{C}^{F^{n}J}$ is a right Bousfield localization
of $\mathcal{C}^{F^{n}J}$. The adjoint pair
$$I^{n}_{!}:\mathcal{C}^{F^{n-1}J}
\rightleftarrows L_{r}\mathcal{C}^{F^{n}J}:I^{n \ast}$$
is a Quillen equivalence.
\end{theorem}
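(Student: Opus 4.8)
The plan is to realise $L_r\mathcal{C}^{F^nJ}$ as a model structure glued along the cloven Grothendieck bifibration $p:=I^{n\ast}\colon\mathcal{C}^{F^nJ}\to\mathcal{C}^{F^{n-1}J}$ from the Reedy model structure on the base and the fibrewise model structures of Lemma 8.2, and then to deduce the localization and equivalence statements. Write $\mathcal{E}=\mathcal{C}^{F^nJ}$ and $\mathcal{B}=\mathcal{C}^{F^{n-1}J}$, and for a map $u$ of $\mathcal{B}$ let $u^\ast$ and $u_!$ be the reindexing functors between the fibres coming from the cleavage described before Lemma 8.2 (the displayed pullback, respectively pushout). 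Recall the two canonical factorisations $\mathcal{X}\xrightarrow{cocart}u_!\mathcal{X}\xrightarrow{f_u}\mathcal{Y}$ and $\mathcal{X}\xrightarrow{f^u}u^\ast\mathcal{Y}\xrightarrow{cart}\mathcal{Y}$ of a map $f$ with $u=p(f)$, already used in the statement, together with the standard fact that a map lying over a composite $\alpha\beta$ of $\mathcal{B}$ factors through the cartesian lift of $\alpha$ by a map over $\beta$, and dually. The first point to check is a compatibility between the three levels: for every map $u$ of $\mathcal{B}$ the functor $u^\ast$ is right Quillen and $u_!$ is left Quillen for the fibre model structures. This is immediate from the explicit cleavage, for on morphisms of a fibre $u^\ast$ is naturally isomorphic to base change along a fixed map of $\mathcal{E}$ and $u_!$ to cobase change along a fixed map, while (trivial) fibrations and (trivial) cofibrations of a fibre are, by Lemma 8.2, detected in $\mathcal{E}$, where these classes are stable under such changes of base. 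In the same vein, using $I^{n\ast}I^n_!\cong\mathrm{id}$ and the Reedy model structure on $\mathcal{B}$ one checks that $I^{n\ast}$ preserves Reedy cofibrations, so that $(I^{n\ast},I^n_\ast)$ is a Quillen pair; this is used below.

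Next I verify the model category axioms. Bicompleteness of $\mathcal{E}$ is clear, and the weak equivalences satisfy two-out-of-three and are closed under retracts because $p$ preserves composites and retracts and these hold in $\mathcal{B}$. For the factorisation axioms I factor $p(f)$ in $\mathcal{B}$ as a cofibration $\beta$ followed by a trivial fibration $\alpha$ (respectively, a trivial cofibration followed by a fibration), form the cartesian lift of $\alpha$, factor $f$ through it by a map over $\beta$, push that map into the fibre over the codomain of $\beta$ via the cocartesian lift of $\beta$, factor the resulting fibre map in the model category of Lemma 8.2 as a trivial cofibration followed by a fibration, and reassemble; an inspection shows that the two halves are, in the sense of the statement, a cofibration followed by a trivial fibration (respectively, a trivial cofibration followed by a fibration). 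For the lifting axioms, given a cofibration $i$ and a trivial fibration $q$ forming a commuting square, I first lift $p(i)$ against $p(q)$ in $\mathcal{B}$, then transport the square over this lift $\ell$ into the fibre over the codomain of $p(i)$: using the cartesian and cocartesian lifts and the identifications above, the fibre part of $q$ becomes $\ell^\ast$ of its original fibre part, hence a fibration by the compatibility step, while the fibre part of $i$ remains a trivial cofibration, so the transported problem is solved in that fibre; the dual argument treats a trivial cofibration against a fibration. Retract closure of the cofibrations and fibrations then follows by identifying these classes with lifting classes: because $I^n_\ast$ carries trivial fibrations of $\mathcal{B}$ to trivial fibrations of $\mathcal{E}$ and the fibrations of a fibre are trivial fibrations of $\mathcal{E}$, a map with the left lifting property against every trivial fibration of $\mathcal{E}$ satisfies both defining conditions of a cofibration, so a map is a cofibration if and only if it has the left lifting property against every trivial fibration; dually, using $I^n_!$ and the trivial cofibrations of a fibre, a map is a fibration if and only if it has the right lifting property against every trivial cofibration. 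Since, moreover, a map is a weak equivalence exactly when it is a composite of a trivial cofibration and a trivial fibration (immediate from the factorisations and two-out-of-three), all model category axioms hold, and one sees at the same time that the trivial cofibrations and trivial fibrations of $L_r\mathcal{C}^{F^nJ}$ are the classes appearing in the definitions of cofibration and fibration.

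It remains to prove the last two assertions. For the first, Hirschhorn's description of Reedy fibrations via the filtration says that $f$ is a Reedy fibration of $\mathcal{C}^{F^nJ}$ precisely when $I^{n\ast}f$ is a Reedy fibration of $\mathcal{C}^{F^{n-1}J}$ and the degree-$n$ relative matching maps of $f$ are fibrations, and the fibre model structure of Lemma 8.2 is arranged exactly so that ``$f^u$ is a fibration in the fibre over $I^{n\ast}\mathcal{X}$'' encodes the latter condition; hence $L_r\mathcal{C}^{F^nJ}$ and $\mathcal{C}^{F^nJ}$ have the same fibrations, while the former plainly has at least the Reedy weak equivalences, so $L_r\mathcal{C}^{F^nJ}$ is a right Bousfield localization of $\mathcal{C}^{F^nJ}$. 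For the Quillen equivalence, $I^{n\ast}$ sends fibrations to fibrations and trivial fibrations to trivial fibrations by the very definition of these classes, so $(I^n_!,I^{n\ast})$ is a Quillen pair, and for $\mathcal{V}$ cofibrant in $\mathcal{C}^{F^{n-1}J}$ and $\mathcal{Z}$ fibrant in $L_r\mathcal{C}^{F^nJ}$ a map $I^n_!\mathcal{V}\to\mathcal{Z}$ is a weak equivalence exactly when its image under $I^{n\ast}$ is, which via $I^{n\ast}I^n_!\cong\mathrm{id}$ is precisely its transpose $\mathcal{V}\to I^{n\ast}\mathcal{Z}$; so the defining condition of a Quillen equivalence holds verbatim.

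The main obstacle is the lifting axiom: reducing a lifting problem in $\mathcal{E}$ to a lifting problem in a single fibre requires careful bookkeeping with the chosen cleavage, and in particular one must check that after transporting the square over the lift $\ell$ selected in $\mathcal{B}$ the two legs do land in one and the same fibre and that the fibre part of the trivial fibration is exactly $\ell^\ast$ applied to its original fibre part (and, dually, for the trivial-cofibration-against-fibration problem). Everything else is either formal or a direct transcription of properties of the Reedy and fibre model structures.
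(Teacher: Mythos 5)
Your proposal is correct and takes essentially the same route as the paper: the model structure is produced by gluing the Reedy structure on $\mathcal{C}^{F^{n-1}J}$ and the fibrewise structures of Lemma 7.2 along the cloven bifibration $I^{n\ast}$. The only difference is one of packaging: the paper delegates the gluing step to the general bifibration theorem (Theorem 7.4, quoted from \cite{St}), whereas you re-derive that theorem by hand; your compatibility check for the reindexing functors $u^{\ast}$ and $u_{!}$ is precisely hypothesis $(iii)$ of that theorem, and your arguments for the right Bousfield localization and Quillen equivalence claims (same fibrations via the degree-$n$ relative matching maps, weak equivalences created by $I^{n\ast}$ together with $I^{n\ast}I^{n}_{!}\cong \mathrm{id}$) are the intended ones.
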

\begin{proof}
This is an application of Theorem 7.4, using Lemmas 7.1 and 7.2.
\end{proof}
The next result is a consequence of \cite[2.2]{St}. 
\begin{theorem}
Let $p:\mathbb{E}\rightarrow \mathbb{B}$ be a
cloven bifibration. Suppose that

$(i)$ the base category $\mathbb{B}$ has a model structure
$(\mathfrak{Cof},\mathfrak{W},\mathfrak{Fib})$;

$(ii)$ for each object $I$ of $\mathbb{B}$, the fibre category
$\mathbb{E}_{I}$ has a model structure $(\mathfrak{Cof}_{I},
\mathfrak{W}_{I},\mathfrak{Fib}_{I})$;

$(iii)$ for every morphism $u:I\rightarrow J$ of $\mathbb{B}$, we
have $u^*(\mathfrak{Fib}_{J})\subseteq \mathfrak{Fib}_{I}$
and $u^*(\mathfrak{Fib}_{J}\cap \mathfrak{W}_{J})\subseteq 
\mathfrak{Fib}_{I}\cap \mathfrak{W}_{I}$.

Then $\mathbb{E}$ has a model structure in which a map 
$f$ is a

$\bullet$ weak equivalence if $p(f)$ is a weak equivalence in 
$\mathbb{B}$;

$\bullet$ cofibration if $f_{u} \in \mathfrak{Cof}_{p(cod(f))}
\cap\mathfrak{W}_{p(cod(f))}$ and $p(f)\in \mathfrak{Cof}$;

$\bullet$ fibration if $f^{u} \in \mathfrak{Fib}_{p(dom(f))}$ 
and $p(f)\in \mathfrak{Fib}$.
\end{theorem}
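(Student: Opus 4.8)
The plan is to deduce the statement from the general existence result \cite[2.2]{St} for model structures on the total category of a cloven bifibration, so that the proof amounts to matching hypotheses $(i)$, $(ii)$ and $(iii)$ to the input of that result. In \emph{loc.\ cit.} the weak equivalences are, exactly as here, the maps inverted by $p$, and the cofibrations and fibrations are the classes described above in terms of the canonical factorisations $\mathcal{X}\xrightarrow{\mathrm{cocart}}u_{!}\mathcal{X}\xrightarrow{f_{u}}\mathcal{Y}$ and $\mathcal{X}\xrightarrow{f^{u}}u^{*}\mathcal{Y}\xrightarrow{\mathrm{cart}}\mathcal{Y}$ of a map $f$ with $u=p(f)$. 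Hypotheses $(i)$ and $(ii)$ supply the model structures on the base and on the fibres that \cite[2.2]{St} requires, and hypothesis $(iii)$ --- that each cartesian transport functor $u^{\ast}$ preserves fibrations and trivial fibrations --- is precisely the compatibility condition under which \cite[2.2]{St} produces a model structure on $\mathbb{E}$. I expect that no dual condition on the cocartesian transport functors is needed: because the fibrewise part of a cofibration is required to be a \emph{trivial} cofibration, the cocartesian lifts are only ever used to move the source of a map along a (trivial) cofibration of the base, never to transport (trivial) cofibrations between fibres.

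For completeness I would indicate the mechanism behind \cite[2.2]{St} in the present language. Since the weak equivalences are $p^{-1}(\mathfrak{W})$ and $p$ is a functor, the two-out-of-three axiom is immediate. For the factorisation axioms, given $f:\mathcal{X}\to\mathcal{Y}$ with $u=p(f):I\to J$, I would first factor $u$ in $\mathbb{B}$ in one of the two standard ways, say $u=I\xrightarrow{c}K\xrightarrow{w}J$; then take the cocartesian lift $\mathcal{X}\to c_{!}\mathcal{X}$ of $c$ to obtain the induced map $c_{!}\mathcal{X}\to\mathcal{Y}$ over $w$, split off the cartesian lift of $w$ to write it as $c_{!}\mathcal{X}\xrightarrow{g'}w^{\ast}\mathcal{Y}\xrightarrow{\mathrm{cart}}\mathcal{Y}$ with $g'$ a map of the fibre $\mathbb{E}_{K}$, and finally factor $g'$ in the model category $\mathbb{E}_{K}$ as a trivial cofibration $j:c_{!}\mathcal{X}\to\mathcal{W}$ followed by a fibration $r:\mathcal{W}\to w^{\ast}\mathcal{Y}$. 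The composite $\mathcal{X}\to\mathcal{W}$ then has $p$-image $c$ and fibrewise part $j$, while $\mathcal{W}\to\mathcal{Y}$ has $p$-image $w$, is a weak equivalence as soon as $w\in\mathfrak{W}$, and has fibrewise part $r$; choosing $c\in\mathfrak{Cof}$ and $w\in\mathfrak{Fib}\cap\mathfrak{W}$ gives a (cofibration, trivial fibration) factorisation, while $c\in\mathfrak{Cof}\cap\mathfrak{W}$ and $w\in\mathfrak{Fib}$ gives a (trivial cofibration, fibration) factorisation --- in both cases via the \emph{same} factorisation of $g'$ in $\mathbb{E}_{K}$, and $(iii)$ plays no role here.

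For the lifting axioms I would take a square with sides $i$ (a cofibration, resp.\ a trivial cofibration) and $q$ (a trivial fibration, resp.\ a fibration), project it to $\mathbb{B}$, and use that $\mathbb{B}$ is a model category to obtain a lift there. Then, using a cocartesian lift on the left edge and a cartesian lift on the right edge (and the chosen base lift), I would transport the whole square into a single fibre of $p$, namely the fibre over the codomain of the left base edge: the left-hand map becomes the fibrewise part of $i$, a trivial cofibration of that fibre, and the right-hand map becomes a cartesian pullback of the fibrewise part of $q$, which by $(iii)$ is again a fibration of that fibre. Solving the resulting $(\textrm{trivial cofibration}, \textrm{fibration})$ lifting problem in the fibre and composing the solution with the cartesian lift produces the required lift in $\mathbb{E}$.

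The step I expect to require the most care --- and which is the real content of \cite[2.2]{St} --- is the bookkeeping with the universal properties of cartesian and cocartesian arrows: verifying that the transported squares above actually commute, that the lift found in the fibre genuinely solves the original square after being pushed back along the cartesian lift, and that the classes of cofibrations and fibrations as defined are closed under retracts. The last point is the delicate one, since the assignments $f\mapsto f_{u}$ and $f\mapsto f^{u}$ are only pseudofunctorial in $f$ (the object $u=p(f)$ varies along a retract diagram), so that the retract of $f^{u}$ produced from a retract of $f$ lives across several fibres rather than in one; one again uses $(iii)$ to pull this data into a single fibre. Rather than redo this analysis, I would simply invoke \cite[2.2]{St}.
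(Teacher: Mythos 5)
Your proposal is correct and follows the same route as the paper, which simply records this theorem as a consequence of \cite[2.2]{St}; your matching of hypotheses $(i)$--$(iii)$ to the input of that result is exactly what is intended. The additional sketch of the factorisation, lifting and retract arguments is a reasonable outline of the content of \emph{loc.\ cit.}, but it is not required, since the paper offers no independent proof beyond the citation.
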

We spell out that an object $\mathcal{X}$ of $\mathcal{C}^{F^{n}J}$
is cofibrant in $L_{r}\mathcal{C}^{F^{n}J}$ if and only if 
$I^{n \ast}\mathcal{X}$ is cofibrant in $\mathcal{C}^{F^{n-1}J}$ 
and $I^{n}_{!}I^{n \ast}\mathcal{X}\rightarrow \mathcal{X}$
is a trivial cofibration in $\mathcal{C}^{F^{n}J}$ if and only if
$I^{n \ast}\mathcal{X}$ is cofibrant in $\mathcal{C}^{F^{n-1}J}$ 
and for each object $\alpha$ of $F^{n}J$ of degree $n$, the
canonical map $colim_{(I^{n}\downarrow \alpha)}I^{n \ast}\mathcal{X}
\rightarrow \mathcal{X}(\alpha)$ is a trivial cofibration. By 
\cite[15.2.9]{Hi} this is equivalent to saying that $\mathcal{X}$ 
is cofibrant in $\mathcal{C}^{F^{n}J}$ and for each object 
$\alpha$ of $F^{n}J$ of degree $n$, the latching map 
$L_{\alpha}\mathcal{X}\rightarrow \mathcal{X}(\alpha)$
of $\mathcal{X}$ at $\alpha$ is a weak equivalence. 

Suppose now that $\mathcal{C}$ is a simplicial model category. Let
$\mathcal{X}:J\rightarrow \mathcal{C}$. For each object $\alpha$
of $J$ of degree $n$ we have a commutative diagram
\[
\xymatrix{
(c)hocolim_{\partial(\overrightarrow{J}\downarrow \alpha)}\mathcal{X} 
\ar[d] \ar[r] & colim_{\partial(\overrightarrow{J}\downarrow \alpha)}
\mathcal{X}=L_{\alpha}\mathcal{X} \ar[d]^{\cong} \ar[r] & \mathcal{X}
(\alpha)\\
(c)hocolim_{(I^{n}\downarrow \alpha)}\mathcal{X} 
\ar[r] &  colim_{(I^{n}\downarrow \alpha)}\mathcal{X}\\
}
  \]
\begin{example}
Let $n\geq 2$. The category $\mathcal{P}(\underline{n})$ (Example 1.4) 
becomes a Reedy category with fibrant constants if we let the 
direct subcategory $\overrightarrow{\mathcal{P}(\underline{n})}$ 
be $\mathcal{P}(\underline{n})$, the inverse subcategory 
$\overleftarrow{\mathcal{P}(\underline{n})}$
be the discrete category on the set of objects of 
$\mathcal{P}(\underline{n})$ and the degree of 
$S\in \mathcal{P}(\underline{n})$ be $|S|$.
One has $F^{0}\mathcal{P}(\underline{n})=\ast$,
$F^{1}\mathcal{P}(\underline{n})=\underline{n}_{+}$,
$F^{n-1}\mathcal{P}(\underline{n})=
\mathcal{P}_{1}(\underline{n})$ and $F^{n}\mathcal{P}
(\underline{n})=\mathcal{P}(\underline{n})$.
If $S\subset \underline{n}$ has cardinality $k$,
$\partial(\overrightarrow{\mathcal{P}
(\underline{n})}\downarrow S)=(I^{k}\downarrow S)
\cong \mathcal{P}_{1}(S)$. The cofibrant 
objects of $\mathcal{C}^{\mathcal{P}(\underline{n})}$ 
are the cofibration cubes of \cite[Definition 1.13]{Go2}.
The cofibrant objects of $L_{r}\mathcal{C}^{\mathcal{P}
(\underline{n})}$ are the cofibration cubes $\mathcal{X}$ for 
which $colim_{\mathcal{P}_{1}(\underline{n})}\mathcal{X}
\rightarrow \mathcal{X}(\underline{n})$ is a weak equivalence.
In general, for $k\geq 2$, the cofibrant objects of $L_{r}\mathcal{C}^{F^{k}
\mathcal{P}(\underline{n})}$ are the cofibrant objects $\mathcal{X}$ 
for which $colim_{\mathcal{P}_{1}(S)}\mathcal{X}\rightarrow \mathcal{X}(S)$ 
is a weak equivalence for each subset $S$ of $\underline{n}$ 
with $|S|=k$. If $\mathcal{C}$ is moreover a simplicial model 
category, by \cite[19.9.1(1)]{Hi} the cofibrant objects of 
$L_{r}\mathcal{C}^{\mathcal{P}(\underline{n})}$ are the 
cofibration cubes $\mathcal{X}$ for which 
$hocolim_{\mathcal{P}_{1}(\underline{n})}\mathcal{X}
\rightarrow \mathcal{X}(\underline{n})$ is a weak equivalence,
and for $k\geq 2$, the cofibrant objects of $L_{r}\mathcal{C}^{F^{k}
\mathcal{P}(\underline{n})}$ are the cofibrant objects $\mathcal{X}$ 
for which $hocolim_{\mathcal{P}_{1}(S)}\mathcal{X}\rightarrow 
\mathcal{X}(S)$ is a weak equivalence for each subset $S$ of 
$\underline{n}$ with $|S|=k$. 
Using the dual of Lemma 6.4 and properties of homotopy pushouts,
it follows that a cofibrant object $\mathcal{X}$ of 
$\mathcal{C}^{\mathcal{P}(\underline{n})}$ 
is cofibrant in $L_{r}\mathcal{C}^{F^{k}\mathcal{P}(\underline{n})}$
for each $2\leq k\leq n$ if and only if $\mathcal{X}$ is strongly 
homotopy co-Cartesian in the sense of \cite[Definition 2.1]{Go2}.
\end{example}

\end{document}